\documentclass[12pt]{amsart}
\usepackage{amsmath,amsfonts,amssymb,amsthm, esint}
\usepackage[english]{babel}
\usepackage{enumerate}
\usepackage{dsfont}
\usepackage{comment}
\usepackage[colorlinks,citecolor=blue]{hyperref}
\usepackage{epsfig,graphicx}
\usepackage{latexsym, amsxtra, mathrsfs, bm}
\usepackage{graphics}
\usepackage{verbatim}
\usepackage[abs]{overpic}
\usepackage{mathtools}
\usepackage{amscd}
\allowdisplaybreaks[3]
\theoremstyle{plain}
        \newtheorem{theorem}{Theorem}[section]
        \newtheorem*{theorem*}{Theorem}
        \newtheorem*{conj*}{Conjecture}
        \newtheorem{lemma}[theorem]{Lemma}
        \newtheorem{prop}[theorem]{Proposition}
        \newtheorem{corollary}[theorem]{Corollary}

        \theoremstyle{definition}
        \newtheorem{definition}[theorem]{Definition}
        \newtheorem{rem}[theorem]{Remark}

\theoremstyle{remark}
        \newtheorem*{remark}{Remark}

        \newtheorem*{example}{Example}

\numberwithin{equation}{section}
\numberwithin{theorem}{section}
\numberwithin{table}{section}
\numberwithin{figure}{section}

\begin{document}
\title{Shift maps and statistical invariants for some dynamical systems}
\author{Sergey Kryzhevich}
\address[Sergey Kryzhevich]{
	Institute of Applied Mathematics, Faculty of Applied Physics and Mathematics, Gda\'nsk University of Technology, 80-233, Gda\'nsk, Poland}
\email[Sergey Kryzhevich]{sergey.kryzhevich@pg.edu.pl}
\author{Yiwei Zhang}
\address[Yiwei Zhang]{School of Mathematical Sciences and Big Data, Anhui University of Science and Technology, Huainan, Anhui 232001, P.R. China}
\email[Yiwei Zhang]{zhangyw@aust.edu.cn, yiweizhang831129@gmail.com}

\thanks{}

\begin{abstract} Given a dynamical system, we study the so-called space of shift functions thus introducing another vision on bifurcations and chaos. As an application of the obtained results, we give a partial solution to an open problem formulated in \cite{Misiurewicz1}: to describe all the one-dimensional maps with all the periodic orbits having the same mean value.

Moreover, we show that there are continuous families of such mappings having infinitely many periodic points. For this purpose, we study the dynamics of the so-called replicator maps, depending on two parameters. Such studies are also motivated by the analysis of the dynamics of evolutionary games under selection. We prove the existence of hyperbolic chaos for the considered map and demonstrate that the average values are the same for all the periodic orbits.
\end{abstract}

\keywords{invariant measures; bifurcations; replicator maps; evolutionary games; chaotic dynamics}

\maketitle

\section{Introduction}

We study the maps of the segment that have the fixed statistical invariants, for example, the mean value for all periodic trajectories. More precisely, we are interested in the following problem \cite[Problem 3.7]{Misiurewicz1}, see also the Abstract of the referred paper.

\noindent\textbf{Problem 1}.
\textit{
Describe all the continuous mappings
$T:{\mathbb R}\mapsto {\mathbb R}$ having periodic points of arbitrarily large period and such that the mean values of all the periodic points coincide.}

Let $\mathcal M$ be the class of the mappings satisfying the above property. Statistical properties of such mappings are discussed in \cite{art_stat}.

The first known example of such systems is given by the formula $x\mapsto Axe^x$. Later on, the replicator map    \begin{equation}\label{equ:Boltzmann0}
f(x)=f_{a,b}(x)=\frac{x}{x+(1-x) e^{a\left(x-b \right)}}
\end{equation}
was studied and it was noticed that all the periodic points of that map (except the fixed points $0$ and $1$) have the same average value $b$.
A detailed survey of the replicator maps theory and the related references are given separately in Section 5.

In this paper, we study a more general problem. Let $\mathfrak A$ be the Borel sigma-algebra engendered by the topology of a metric compact set $X$. Consider the space $E$ of all the bounded functions from $X$ to $\mathbb R$ measurable in $\mathfrak A$. Let $C=C(X\mapsto {\mathbb R})$ be the set of all the continuous functions on $X$.

In Sections 2 and 3, given a mapping $T$ of a compact set $X$, we consider the space
\begin{equation}\label{eqh1} H:=\{\psi\in C: \psi=\varphi-\varphi\circ T, \quad \varphi\in E\}
\end{equation}
of the so-called shift functions. For all these functions (and their limits in the space $C$) their mean values over any $T$-invariant measure equal zero. So, an enforced Problem 1 can be formulated as follows: to describe all the mappings of the segment with infinitely many ergodic invariant measures and  $x-b$ being in the closure of $H$ for some constant $b$. The results of Section 2 (Theorem 2.4) imply that the codimension of the closure $H$ in $C$ is infinite if and only if the first of the mentioned properties is satisfied.

In Section 4, we offer an explicit method to construct mappings with $x-b\in H$. Given a continuous invertible map $h:I\mapsto {\mathbb R}$ of an open connected subset $I\subset {\mathbb R}$ and constants
$a,b \in \mathbb R$, $a>0$ we demonstrate that the map
\begin{equation}\label{eqfh}
f_h:=h^{-1}\circ (h(x)+a(x-b))
\end{equation}
has the required property. In particular, taking
$h(x)=\ln \left(\frac{1-x}x\right)$, we get a replicator map. defined by Eq.\, \eqref{equ:Boltzmann0}.

Moreover, in Section 5, we study the properties of the replicator map (in particular, the bifurcations and hyperbolic chaotic invariant sets). All in all, this gives an opportunity to construct infinitely many maps of the class $\mathcal M$: it suffices to take in \eqref{eqfh} a map $h$ being $C^1$ close to $\ln \left(\frac{1-x}x\right)$.

The proof of Theorem 5.1 is given in the Appendix.

\section{Shift functions and ergodic theory}

Let $(X,d)$ be a compact metric space, $T:X\mapsto X$ be a continuous mapping, and $\Omega=\Omega_T$ be its non-wandering set. Consider the space of all the \emph{shift functions} defined by Eq.\eqref{eqh1}

Given a Banach space $K\supset H$, we consider the closure $H_K$ of $H$ in $K$. For example, $H_C$ is the closure of $H$ in $C$.

The space $H_C$ never coincides with $C$. Let $\mu$ be any Borel probability invariant measure for $T$. Given a function $\phi$ in
${\mathbb L}^1_\mu$ we consider it Birkhoff average $\hat \phi$.

\begin{lemma}\label{les1} For any invariant measure $\mu$ and any $\psi \in H_C$ we have $\hat\psi=0$.
\end{lemma}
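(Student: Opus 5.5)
Fix an invariant Borel probability measure $\mu$ and $\psi\in H_C$. The plan is to treat $\psi\in H$ first, where the Birkhoff sums telescope, and then pass to uniform limits. Here $\hat\psi(x)=\lim_{n\to\infty}\frac1n\sum_{k=0}^{n-1}\psi(T^kx)$, which exists $\mu$-a.e. by Birkhoff's theorem since $\psi$ is continuous, hence in ${\mathbb L}^1_\mu$. The claim is that $\hat\psi=0$ $\mu$-a.e., and in fact wherever the limit exists.

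\textbf{Step 1: $\psi\in H$.} Write $\psi=\varphi-\varphi\circ T$ with $\varphi\in E$, so $|\varphi|\le M$ on $X$. The Birkhoff sum telescopes:
\[
\frac1n\sum_{k=0}^{n-1}\psi(T^kx)=\frac1n\bigl(\varphi(x)-\varphi(T^nx)\bigr),
\]
and its absolute value is at most $2M/n$. This tends to $0$ for \emph{every} $x\in X$, so $\hat\psi\equiv 0$. Boundedness of $\varphi$ is essential here, and it holds because $E$ consists of bounded functions; measurability of $\varphi$ is not even needed for this step.

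\textbf{Step 2: $\psi\in H_C$.} Given $\varepsilon>0$, choose $\psi_\varepsilon\in H$ with $\|\psi-\psi_\varepsilon\|_C<\varepsilon$. Then for every $x$ and $n$,
\[
\Bigl|\frac1n\sum_{k=0}^{n-1}\psi(T^kx)\Bigr|\le \varepsilon+\frac{2M_\varepsilon}{n},
\]
where $M_\varepsilon$ is the bound of the corresponding $\varphi_\varepsilon$. Taking $\limsup_{n\to\infty}$ gives $\limsup_n|\frac1n S_n\psi(x)|\le\varepsilon$ for every $x$. Since $\varepsilon$ is arbitrary, the averages converge to $0$ everywhere. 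Hence $\hat\psi=0$, and consequently $\int\psi\,d\mu=\int\hat\psi\,d\mu=0$.

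\textbf{Main obstacle.} The only subtle point is that $M_\varepsilon$ may blow up as $\varepsilon\to0$. This is harmless because the limit in $n$ is taken before $\varepsilon\to0$, so the $2M_\varepsilon/n$ term vanishes first. No $\mu$-specific argument is needed, because the estimate is uniform in $x$. An alternative route is to integrate directly: $\int(\varphi-\varphi\circ T)\,d\mu=0$ by invariance, since $\varphi$ is bounded and measurable. That route gives only the zero mean, whereas the telescoping argument gives the pointwise statement.
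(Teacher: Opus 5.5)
Your proof is correct, but it takes a different route from the paper. The paper argues on the dual side. By invariance, $\int(\varphi-\varphi\circ T)\,d\mu=0$ for bounded measurable $\varphi$, so the functional $\psi\mapsto\int\psi\,d\mu$ vanishes on $H$, and by sup-norm continuity on $H_C$. Birkhoff's theorem is then invoked to pass from vanishing integrals to $\hat\psi=0$. This step implicitly uses that the zero-mean property holds for \emph{every} invariant measure, in particular for the normalized restriction of $\mu$ to any $T$-invariant set. Hence the conditional expectation $\hat\psi$ of $\psi$ on the invariant $\sigma$-algebra vanishes $\mu$-a.e. So your remark that the integral route gives only the zero mean slightly undersells it. You instead telescope the Birkhoff sums, use the uniform bound $2M/n$, and finish with an $\varepsilon$-approximation in which you take the limits in the correct order. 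Your route gives a stronger and more elementary conclusion: $\sup_{x\in X}\bigl|\frac1n\sum_{k=0}^{n-1}\psi(T^kx)\bigr|\to 0$. That is, the ergodic averages converge to $0$ uniformly on all of $X$, independently of $\mu$, without Birkhoff's theorem or ergodic decomposition, and without using measurability of $\varphi$. The paper's route instead exhibits $H_C$ as lying in the common kernel of all invariant measures, viewed as functionals on $C$. That is exactly the structure the paper exploits later, via Hahn--Banach and the Riesz representation, in the codimension theorem.
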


\begin{proof} Note that for any Borel probability invariant measure $\mu$ we have
\begin{equation}\label{eqs1}
\int\limits_X (\varphi (x)-\varphi(Tx))\ d\mu=0.
\end{equation}
So, the integrals over invariant measures vanish on the space $H_C$. Then it suffices to apply the Birkhoff theorem.
\end{proof}

In particular, non-zero constants do not belong to $H_C$.

Next lemma demonstrates that, in general, the space $H_C$ is quite rich.

\begin{lemma} If $\varphi \in C$ is such that $\varphi|\Omega=0$ then $\varphi\in H_C$.
\end{lemma}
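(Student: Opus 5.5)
The plan is to show that every continuous function vanishing on a neighbourhood of $\Omega$ already lies in $H$. Then we approximate a general $\varphi$ with $\varphi|\Omega=0$ uniformly by such functions.

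\emph{Step 1 (truncation).} Fix $\varepsilon>0$ and set $\varphi_\varepsilon(x)=\operatorname{sign}(\varphi(x))\max\{|\varphi(x)|-\varepsilon,0\}$. This function is continuous and satisfies $\|\varphi-\varphi_\varepsilon\|_C\le\varepsilon$. Its support lies in the compact set $K_\varepsilon=\{x:|\varphi(x)|\ge\varepsilon\}$. Since $\varphi|\Omega=0$, we have $K_\varepsilon\cap\Omega=\emptyset$. So it suffices to prove that $\varphi_\varepsilon\in H$, because $H_C$ is closed.

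\emph{Step 2 (uniform bound on visits to $K_\varepsilon$).} Every $x\in K_\varepsilon$ is wandering. Hence it has an open neighbourhood $V_x$ with $T^nV_x\cap V_x=\emptyset$ for all $n\ge1$. Consequently, any orbit $\{T^ny\}_{n\ge0}$ meets $V_x$ at most once: if $T^ky,T^{k+n}y\in V_x$, then $T^{k+n}y\in T^nV_x\cap V_x$. By compactness, finitely many of these sets, say $V_{x_1},\dots,V_{x_m}$, cover $K_\varepsilon$. Therefore every orbit visits $K_\varepsilon$ at most $m$ times. This uniform bound is the main point of the argument. Without it the candidate function below could be unbounded, and we would not land in $E$.

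\emph{Step 3 (explicit cohomology).} Define
\[
g(x)=\sum_{n=0}^{\infty}\varphi_\varepsilon(T^nx).
\]
By Step 2, for each $x$ at most $m$ terms are nonzero. Hence the series converges pointwise and $|g|\le m\|\varphi\|_C$. Moreover, $g$ is the pointwise limit of the continuous partial sums, so it is Borel measurable, and thus $g\in E$. A telescoping computation gives $g(x)-g(Tx)=\varphi_\varepsilon(x)$, which is continuous. Therefore $\varphi_\varepsilon\in H$. Since $\|\varphi-\varphi_\varepsilon\|_C\le\varepsilon$ and $\varepsilon>0$ is arbitrary, we conclude $\varphi\in H_C$.
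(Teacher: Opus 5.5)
Your proof is correct and follows essentially the same route as the paper. You cover the compact set away from $\Omega$ by finitely many wandering neighbourhoods, which bounds the number of visits of any orbit. This makes $\sum_{n\ge0}\varphi_\varepsilon\circ T^n$ a bounded Borel transfer function, and you finish by uniform approximation; your soft-thresholding $\varphi_\varepsilon$ simply makes explicit the approximation step that the paper only asserts.
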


\begin{proof} We use the following folklore statement of the Topological Dynamics.

\begin{prop} Given a neighborhood $U$ of the non-wandering set $\Omega$ there exists a number $N$ such that any orbit of the map $T$ spends of at most $N$ steps outside of $U$.
\end{prop}

\begin{proof} Fix an open set $U\supset \Omega$. For any point $x_0 \in X\setminus U$, there is a neighborhood $V(x_0)$ such that $V(x_0)\cap T^n(V(x_0))=\emptyset$ for any $n\in {\mathbb N}$. The neighborhoods $V(x_0)$ cover the compact set $X\setminus U$; take a finite subcoverage
$\{V_1,\ldots,V_N\}$. Any trajectory cannot enter any of the sets $V_j$ more than once which proves the Proposition.
\end{proof}

The latter Proposition implies that for any function $\varphi$ supported on the completion of $U$, the series
$$\sum_{j=0}^\infty \varphi\circ T^j$$
converges (all the terms are bounded and, for every point $x$, at most $N$ of them do not vanish at $x$). Evidently, the corresponding limit function belongs to the space $E$. The functions vanishing on $\Omega$ can be approximated by those supported out of neighborhoods of $\Omega$ in the space of continuous functions.
\end{proof}

To continue, we need to prove the following statement. Given a function $\psi:X\mapsto {\mathbb R}$, we define $\psi_+=\max(\psi,0)$, $\psi_-=\max(-\psi,0)$, so $\psi=\psi_+-\psi_-$.

\begin{lemma}\label{lemm_inv}
Let $\mu$ be a Borel probability measure on $X$ (not necessarily invariant with respect to $T$.) Let $\psi\in {\mathbb L}^2_\mu$ be a function, orthogonal to all the functions of the space $H$. Then the measures $\nu_\pm$ such that $d\nu_\pm = \psi_{\pm} d\mu$  are invariant with respect to $T$ unless the corresponding functions $\psi_\pm$ vanish almost everywhere.
\end{lemma}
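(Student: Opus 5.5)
The idea is to turn the orthogonality condition into invariance of the signed measure $d\nu=\psi\,d\mu$, and then deduce invariance of its positive and negative parts from the minimality of the Jordan decomposition. Since $\mu$ is a probability measure, $\psi\in{\mathbb L}^2_\mu\subset{\mathbb L}^1_\mu$. Hence $\nu$ is a finite signed Borel measure on $X$, and its Jordan decomposition is exactly $\nu=\nu_+-\nu_-$ with $d\nu_\pm=\psi_\pm\,d\mu$. These two measures are mutually singular, being concentrated on the disjoint sets $\{\psi>0\}$ and $\{\psi<0\}$.

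\emph{Step 1: $\nu$ is $T$-invariant.} I would not use indicator functions $\varphi=\mathbf 1_A$, because $\mathbf 1_A-\mathbf 1_A\circ T$ is in general not continuous and so need not lie in $H$. Instead I use only continuous $\varphi$. For $\varphi\in C$ the function $\varphi-\varphi\circ T$ is continuous and $\varphi$ is bounded and Borel, so $\varphi-\varphi\circ T\in H$. Orthogonality then gives
\begin{equation*}
\int_X \varphi\,d\nu=\int_X\varphi\circ T\,d\nu=\int_X\varphi\,d(T_*\nu)\qquad\text{for all }\varphi\in C .
\end{equation*}
Since $X$ is a compact metric space, the uniqueness part of the Riesz representation theorem for finite signed Borel measures yields $T_*\nu=\nu$.

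\emph{Step 2: $\nu_\pm$ are invariant.} The push-forwards $T_*\nu_\pm$ are positive measures, and $T_*\nu_+-T_*\nu_-=T_*\nu=\nu$. By minimality of the Jordan decomposition, any representation $\nu=\lambda_1-\lambda_2$ with $\lambda_i\ge0$ satisfies $\lambda_1\ge\nu_+$ and $\lambda_2\ge\nu_-$. To verify this, let $P=\{\psi>0\}$; then $\nu_+(A)=\nu(A\cap P)\le\lambda_1(A\cap P)\le\lambda_1(A)$, and similarly for $\nu_-$. Hence $T_*\nu_+\ge\nu_+$. Total masses are preserved by push-forward, so $T_*\nu_+(X)=\nu_+(X)$, and therefore $T_*\nu_+-\nu_+$ is a nonnegative measure of total mass zero, i.e.\ it vanishes. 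Thus $T_*\nu_+=\nu_+$, and likewise $T_*\nu_-=\nu_-$.

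\emph{Step 3: normalization.} If $\psi_+$ does not vanish $\mu$-a.e., then $\nu_+(X)>0$, and $\nu_+/\nu_+(X)$ is an invariant Borel probability measure; the same holds for $\nu_-$. This is the sense of ``invariant'' needed to combine the lemma with Lemma~\ref{les1}.

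The main obstacle is Step 1. The definition of $H$ demands continuity of the shift function, so the naive choice of indicators is not allowed. The workaround is to restrict to continuous $\varphi$ and rely on Riesz uniqueness; this is the point where compactness of $X$ and the finiteness of $\nu$ (from $\psi\in{\mathbb L}^1_\mu$) are used. Step 2 is a routine measure-theoretic argument once $T_*\nu=\nu$ is known.
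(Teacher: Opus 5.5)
Your proof is correct. Step~1 is the paper's first step made rigorous. The paper also tests orthogonality only against $\varphi-\varphi\circ T$ with $\varphi\in C$, and then asserts that $\int\varphi\,\psi\,d\mu=\int(\varphi\circ T)\,\psi\,d\mu$ holds for all bounded Borel $\varphi$. That is exactly your $T_*\nu=\nu$, and you supply the Riesz-uniqueness justification the paper leaves implicit. So when the paper later plugs in $\mathbf 1_{A_+}$, it is using this extended identity; it never claims $\mathbf 1_A-\mathbf 1_A\circ T\in H$.

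The routes differ in Step~2. The paper works by hand on $A_+=\{\psi>0\}$. From $\nu(A_+)=\nu(T^{-1}(A_+))$ and the signs of $\psi$ on $A_+\setminus T^{-1}(A_+)$ and $T^{-1}(A_+)\setminus A_+$, it deduces $\psi\mathbf 1_{A_+}=\psi\mathbf 1_{T^{-1}(A_+)}$ $\mu$-a.e. It then transports $\int\varphi\,d\nu_+$ through $T$ using this identity. You replace this with minimality of the Jordan decomposition ($T_*\nu_+\ge\nu_+$) together with conservation of total mass. Both arguments rest on the same sign bookkeeping on the positive set.

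Your version is shorter and uses $\psi$ only to identify $\nu_\pm$. In effect it shows that the positive and negative parts of \emph{any} finite $T$-invariant signed Borel measure are invariant. That is the form in which the lemma is later invoked, in the proof of the theorem comparing $m$ and $n$. There $\nu$ comes from the Riesz representation of a functional rather than from an ${\mathbb L}^2_\mu$ density, so your argument applies verbatim. The paper's computation is more concrete and shows the mechanism directly: the positive set $A_+$ is essentially $T$-invariant.
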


\begin{proof} Let
$$A_+=\{x:\psi(x)>0\}, \quad A_-=\{x:\psi(x)<0\},\quad A_0=\{x:\psi(x)=0\}.$$
Given a Borel set $B$, we denote the corresponding indicator function by ${\mathbf 1}_B$.

Due to the assumption of the lemma,
$$\int\limits_X \varphi \cdot \psi\, d\mu=\int\limits_X (\varphi\circ T) \cdot \psi\, d\mu$$
for any continuous function $\varphi$. So, the same is true for any bounded Borel function.

In particular,
$$\int\limits_X {\mathbf 1}_{A_+} \cdot \psi\, d\mu=\int\limits_X {\mathbf 1}_{T^{-1}(A_+)} \cdot \psi\, d\mu$$
This means that
\begin{equation}\label{eqmu0}
\int\limits_{A_+} \psi\, d\mu=\int\limits_{T^{-1}(A_+)} \psi\, d\mu=
\int\limits_{A_+} \psi\, d\mu-\int\limits_{A_+\setminus T^{-1}(A_+)} \psi\, d\mu+\int\limits_{T^{-1}(A_+)\setminus A_+} \psi\, d\mu
\end{equation}
The last two terms in the right hand side of the Eq.\eqref{eqmu0} are non-positive. Therefore
$$
\mu(A_+)\setminus T^{-1}(A_+)=\mu(A_-\cap T^{-1}(A_+))=0.
$$
and, consequently,
\begin{equation}\label{invariance}
\int\limits_{A_+} \eta\, \psi\, d\mu=\int\limits_{T^{-1}(A_+)} \eta\, \psi\, d\mu, \qquad \forall  \eta\in {\mathbb L}^2_\mu.
\end{equation}

Then, for any $\varphi\in C$
$$\begin{array}{c}
\int\limits_X \varphi d\nu_+=\int\limits_X \varphi \cdot \psi_+\, d\mu=
\int\limits_X \varphi\cdot {\mathbf 1}_{A_+}\cdot \psi\, d\mu=
\int\limits_X (\varphi\circ T)\cdot {\mathbf 1}_{T^{-1}(A_+)}\cdot \psi\, d\mu=\\[10pt]
[\mbox{due to \eqref{invariance}},\, \eta=\varphi\circ T]= \int\limits_X (\varphi\circ T)\cdot {\mathbf 1}_{(A_+)}\cdot \psi\, d\mu=\int\limits_X (\varphi\circ T) d\nu_+.
\end{array}$$
This proves that the measure $\nu_+$ is invariant if the function $\psi_+$ is non-zero.
\end{proof}

\begin{theorem}
Let $(X,\mu)$ be a probability space, and $T:X\mapsto X$ be a mapping, preserving the measure $\mu$. Let $L^2:={\mathbb L}^2_\mu$. Then $\mu$ is ergodic if and only if $L^2=H_{L^2}\oplus \langle 1 \rangle$. \end{theorem}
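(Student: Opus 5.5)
The plan is to describe the orthogonal complement of $H_{L^2}$ in $L^2$ and to show that it equals $\langle 1\rangle$ exactly when $\mu$ is ergodic. Throughout I keep the standing setting of the section: $X$ is a compact metric space and $T$ is continuous, so that $\varphi-\varphi\circ T\in H$ for every $\varphi\in C$.

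First, $\langle 1\rangle\perp H_{L^2}$. Indeed, for $\psi=\varphi-\varphi\circ T\in H$ we have $\langle \psi,1\rangle=\int(\varphi-\varphi\circ T)\,d\mu=0$ by invariance of $\mu$, which is \eqref{eqs1}. Hence $H_{L^2}\oplus\langle 1\rangle$ is an orthogonal sum. The identity $L^2=H_{L^2}\oplus\langle 1\rangle$ is therefore equivalent to $H^\perp=\langle 1\rangle$.

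\emph{Ergodic $\Rightarrow$ decomposition.} Let $\psi\in L^2$ be orthogonal to $H$.
\begin{enumerate}[(i)]
\item For every real $t$, the function $\psi-t$ is still orthogonal to $H$, since constants are.
\item As in the proof of Lemma \ref{lemm_inv}, orthogonality to $\varphi-\varphi\circ T$ for all $\varphi\in C$ extends to all bounded Borel $\varphi$. The first part of that proof, applied to $\psi-t$, then gives $\mu(A_t\setminus T^{-1}A_t)=0$ for $A_t=\{\psi>t\}$.
\item Since $\mu$ is invariant, $\mu(T^{-1}A_t)=\mu(A_t)$, so $\mu(A_t\,\triangle\, T^{-1}A_t)=0$. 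Thus $A_t$ is invariant mod $0$.
\item Ergodicity gives $\mu(A_t)\in\{0,1\}$ for every $t$. The map $t\mapsto\mu(A_t)$ is nonincreasing and takes only the values $0$ and $1$, so $\psi$ equals a constant $\mu$-a.e. (take $c=\sup\{t:\mu(A_t)=1\}$).
\end{enumerate}
Hence $H^\perp=\langle1\rangle$, and the decomposition holds.

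\emph{Decomposition $\Rightarrow$ ergodic.} Suppose $\mu$ is not ergodic, and choose an invariant set $A$ with $0<\mu(A)<1$ and $T^{-1}A=A$ mod $0$. Put $\psi={\mathbf 1}_A-\mu(A)$; this is nonzero in $L^2$ and orthogonal to $1$. For any $\varphi\in E$,
$$\int \varphi\circ T\cdot{\mathbf 1}_A\,d\mu=\int \varphi\circ T\cdot {\mathbf 1}_{T^{-1}A}\,d\mu=\int\varphi\cdot{\mathbf 1}_A\,d\mu$$
by invariance of $\mu$. Therefore $\psi\perp H$, hence $\psi\perp H_{L^2}\oplus\langle1\rangle$, so the sum is a proper closed subspace of $L^2$ and the decomposition fails.

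The only delicate point is step (ii) of the forward direction. It requires passing from continuous test functions to indicator functions, and it requires reading the relation $\mu(A_+\setminus T^{-1}A_+)=0$ correctly from Lemma \ref{lemm_inv} (the lemma's proof has a typo there). The rest is routine.
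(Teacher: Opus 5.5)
Your proof is correct. For the direction ``ergodic $\Rightarrow L^2=H_{L^2}\oplus\langle 1\rangle$'' it takes a different route from the paper.

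\textbf{The paper's route.} It argues by contradiction. It picks a nonzero $\psi\perp H_{L^2}\oplus\langle 1\rangle$ and observes that $\int\psi\,d\mu=0$ forces $\psi_+$ and $\psi_-$ both to be nonzero. It then uses the full conclusion of Lemma~\ref{lemm_inv}, namely invariance of the measures $\psi_\pm\,d\mu$. This gives two mutually singular invariant measures absolutely continuous with respect to $\mu$, which contradicts ergodicity.

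\textbf{Your route.} You use only the set-theoretic first half of that lemma's proof, applied to every shift $\psi-t$. You then use invariance of $\mu$ to upgrade $\mu(A_t\setminus T^{-1}A_t)=0$ to invariance mod $0$ of every level set. Ergodicity then makes $\psi$ a.e.\ constant directly.

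\textbf{What each approach buys.}
\begin{itemize}
\item Your argument avoids the second half of Lemma~\ref{lemm_inv}.
\item It also avoids the unstated fact that an ergodic $\mu$ has no finite invariant $\nu\ll\mu$ other than multiples of $\mu$. The standard proof of that fact is precisely your level-set argument applied to $d\nu/d\mu$.
\item You obtain $H^\perp=\langle 1\rangle$ explicitly rather than by contradiction.
\item The paper's Lemma~\ref{lemm_inv} does not require $\mu$ to be invariant, whereas your step (iii) does. That is why the paper can reuse the lemma in the next theorem, for the signed measure produced by Hahn--Banach.
\end{itemize}

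\textbf{Converse direction.} Here the two proofs coincide. Your $\mathbf{1}_A-\mu(A)$ is, up to the factor $\mu(A)(1-\mu(A))$, the density of $\nu_1-\nu_2$. These are the paper's two conditional measures, which agree on $H_{L^2}\oplus\langle 1\rangle$ but not on $L^2$.
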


\begin{proof} Observe that due to \eqref{eqs1}, we have that any constant function is orthogonal to $H_{L^2}$ and hence, $1\notin H_{L^2}$.

First, suppose that the measure $\mu$ is not ergodic. Then there exists a measurable invariant set $A\subset X$ with $\mu(A)\in (0,1)$. Consider the conditional probabilities $\nu_1=\mu(\cdot|A)$ and $\nu_2=\mu(\cdot|X\setminus A)$. This measure is absolutely continuous with respect to $\mu$ with the density being a constant function on $A$ and 0 on its completion (and, hence, an $L^2$ function). Both these measures engender integral functionals that vanish on $H_{L^2}$ and take the value 1 on the constant 1. So, they coincide on $H_{L^2}\oplus \langle 1 \rangle$. On the other hand, they must differ on $L^2$, so $H_{L^2}\oplus \langle 1 \rangle\neq L^2$.

Now let the measure $\mu$ be ergodic. Suppose that $H_{L^2}\oplus \langle 1 \rangle\neq L^2$. The space $H_{L^2}\oplus \langle 1 \rangle$ is closed in $L^2$, so we can take a function $\psi\in L^2$,
$\psi \neq 0$, orthogonal to the space $H_{L^2}\oplus \langle 1 \rangle$. We have
$$\int\limits_X \psi \, d\mu=0,$$
so the functions $\psi_+$ and $\psi_-$ are both nonzero. Due to Lemma \ref{lemm_inv}, these functions define mutually singular invariant measures both being absolutely continuous with respect to $\mu$. This contradicts to the assumption that $\mu$ is ergodic.
\end{proof}

The following statement illustrates the relation between invariant measures and the space $H$.
Let $m$ be the dimension of the set of all the invariant measures and $n$ be the codimension of the space $H_C$ in $C.$

\begin{theorem} If $n$ is finite then $m$ is finite and vice-versa. Moreover, in this case, $m=n$.
\end{theorem}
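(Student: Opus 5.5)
Via Riesz duality, the annihilator of $H_C$ in $C^*$ is exactly the space of signed invariant measures, and the claim $m=n$ then reduces to the finite-dimensional identity between codimension and annihilator dimension. Here $m$ is read as the dimension of the linear span of the $T$-invariant Borel probability measures inside the space $\mathcal M(X)$ of finite signed Borel measures. By the Riesz representation theorem, $C^*=\mathcal M(X)$.

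\textbf{Step 1: the annihilator.} Let
$$H_C^\perp=\{\nu\in\mathcal M(X):\textstyle\int\psi\,d\nu=0\ \ \forall\psi\in H_C\}.$$
We claim that $H_C^\perp$ is exactly the space $\mathcal I$ of signed invariant measures, i.e. those $\nu$ with $\int\varphi\,d\nu=\int\varphi\circ T\,d\nu$ for all $\varphi\in C$.

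For the inclusion $\mathcal I\subset H_C^\perp$, take $\nu\in\mathcal I$. The identity defining $\mathcal I$ extends from $C$ to all bounded Borel $\varphi\in E$ by monotone class arguments, applied to each of $\nu_\pm$ separately. This gives \eqref{eqs1} for $\nu$, so $\nu$ kills $H$ and, by continuity, kills $H_C$.

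For the reverse inclusion, take $\nu\in H_C^\perp$. Since $\varphi-\varphi\circ T\in H$ for every $\varphi\in C$, we get $\nu\circ T^{-1}=\nu$.

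Next, write $\nu=\nu_+-\nu_-$ (Jordan decomposition) and apply Lemma~\ref{lemm_inv} with $\mu=|\nu|/|\nu|(X)$ and $\psi=d\nu/d\mu=\mathbf 1_{P}-\mathbf 1_{N}$, a bounded function and hence in $L^2_\mu$. The orthogonality of $\psi$ to $H$ is exactly the statement $\nu\in H^\perp$. The lemma then shows that $\nu_\pm$ are, after normalization, invariant probability measures. Therefore $\mathcal I=\operatorname{span}\{\text{invariant probability measures}\}$, and $m=\dim\mathcal I=\dim H_C^\perp$.

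\textbf{Step 2: codimension versus annihilator.} Since $H_C$ is a closed subspace, Hahn--Banach gives $(C/H_C)^*\cong H_C^\perp$. If $n=\dim C/H_C<\infty$, then $\dim (C/H_C)^*=n$, so $m=n$.

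Conversely, suppose $m<\infty$, and pick a basis $\nu_1,\dots,\nu_m$ of $H_C^\perp$. Consider the map $C\to\mathbb R^m$, $\varphi\mapsto(\int\varphi\,d\nu_i)_i$. Its kernel is $\{\varphi:\int\varphi\,d\nu_i=0\ \forall i\}$, which by the bipolar theorem (Hahn--Banach) equals the closed subspace $H_C$. Hence $C/H_C$ injects into $\mathbb R^m$, so $n\le m<\infty$, and the first case then gives $n=m$.

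The infinite case follows by contraposition of the two implications above.

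\textbf{Main obstacle.} The delicate point is Step 1, namely showing that an annihilating signed measure decomposes into invariant \emph{nonnegative} parts, so that $\mathcal I$ really is spanned by invariant probabilities. This is precisely what Lemma~\ref{lemm_inv} provides. One must check that its hypothesis is satisfied with the density $\psi=\mathbf 1_P-\mathbf 1_N$ of the Hahn decomposition $X=P\cup N$ relative to $\mu=|\nu|$, and that orthogonality to $H$ in the $\nu$-pairing is the same as $L^2_\mu$-orthogonality of $\psi$ to $H$. Both follow from $d\nu=\psi\,d\mu$.
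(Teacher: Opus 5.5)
Your proof is correct. It shares its ingredients with the paper's proof (Hahn--Banach, Riesz, Lemma~\ref{lemm_inv}) but organizes them differently.

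The paper proves each inequality separately, by contradiction. For $m\le n$, it takes $n+1$ distinct ergodic measures and finds a nontrivial combination vanishing on a complement $\langle\varphi_1,\dots,\varphi_n\rangle$ of $H_C$, hence on all of $C$. This contradicts the mutual singularity of ergodic measures. For $n\le m$, it picks $\varphi\notin H_C$ killed by every ergodic measure. It then extends $J$ (with $J|_{H_C}=0$, $J\varphi=1$) by Hahn--Banach, represents $J$ by a signed measure via Riesz, and invokes Lemma~\ref{lemm_inv}.

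You instead prove a single structural fact: $H_C^\perp$ is the space of signed invariant measures, and that space is spanned by invariant probabilities. This is the only place Lemma~\ref{lemm_inv} is used. Both inequalities then reduce to the standard identities $(C/H_C)^*\cong H_C^\perp$ and ${}^\perp(H_C^\perp)=H_C$. The second identity is exactly the Hahn--Banach separation the paper carries out by hand.

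What your route gains:
\begin{itemize}
\item It never uses ergodic measures, their mutual singularity, or the ergodic decomposition.
\item It fixes the meaning of $m$ as $\dim\operatorname{span}$ of the invariant probabilities. This agrees with the paper's description (``$m$ ergodic measures, all others convex combinations''), since distinct ergodic measures are linearly independent and, by ergodic decomposition, span all invariant measures.
\item It makes explicit that \emph{both} Jordan parts of an annihilating signed measure are invariant. That is what the paper's Part~2 really needs to conclude $\int\varphi\,d\nu=0$; the paper only discusses $\nu_+$.
\end{itemize}
The paper's route is more hands-on, and its Part~1 needs nothing beyond mutual singularity.

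Two harmless slips:
\begin{itemize}
\item With $\mu=|\nu|/|\nu|(X)$, one has $d\nu/d\mu=|\nu|(X)(\mathbf 1_P-\mathbf 1_N)$, not $\mathbf 1_P-\mathbf 1_N$. A positive constant does not affect orthogonality.
\item When you extend the invariance identity from $C$ to bounded Borel functions, ``applied to each of $\nu_\pm$ separately'' should mean dominated convergence with respect to $\nu_\pm$. It cannot mean invariance of $\nu_\pm$, which you only establish afterwards. Alternatively, note that $T_*\nu=\nu$ by Riesz uniqueness and change variables directly.
\end{itemize}
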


\begin{proof}

\noindent\textbf{1.} Suppose that $n$ is finite, and $m>n$ (including the case $m=\infty$). Let $\varphi_1=1, \varphi_2,\varphi_n$ be continuous functions such that
$$C=H_C\oplus \langle \varphi_1,\ldots, \varphi_n\rangle.$$
Suppose that there exist $n+1$ distinct ergodic invariant measures $\mu_1,\ldots,\mu_{n+1}$. Then there exist constants $c_1,\ldots,c_{n+1}$ (not all zeros) such that
$$\sum_{j=1}^{n+1} c_j \int\limits_X \varphi_i \, d\mu_j =0, \qquad i=1,\ldots, n.$$
Without loss of generality, one may assume that there exists a number
$k\in \{1,\ldots,n\}$ such that
$$c_1,\ldots c_k\ge 0, \qquad c_{k+1},\ldots,c_{n+1}<0.$$
Then
$$\nu_1:=c_1\mu_1+\ldots c_k\mu_k=-c_{k+1}\mu_{k+1}-\ldots -c_{n+1}\mu_{n+1}=:\nu_2$$
since they coincide as operators on the space $C$. However, all the measures $\mu_j$ are mutually singular and the same must be true for measures $\nu_1$ and $\nu_2$. This contradiction demonstrates that if $n$ is finite, then $m\le n$.

\noindent\textbf{2.} Suppose that the set of invariant measures has the dimension $m$ or, in other words, there exist $m$ ergodic invariant measures $\mu_1, ..., \mu_m$ and all other invariant measures are their convex combinations. If the codimension $n$ of $H_C$ in $C$ is greater than $m$, there must be a function $\varphi\notin H_C$ such that
\begin{equation}\label{eqintphimu}
\int\limits_X \varphi \, d\mu_j=0,\qquad j=1,\ldots, m
\end{equation}
Define the space $H_1:=H_C\oplus \langle \varphi\rangle$ (this is a closed subset of $C$) and define a linear functional $J$ on $H_1$ such that $J|_{H_C}=0$ and $J\varphi =1$. Evidently, the functional $J$ is continuous on $H_1$ and, due to the Hahn-Banach theorem, it can be extended to a continuous linear functional on $C$. By the Riesz representation theorem, there is a bounded signed Borel measure $\nu$ such that
$$J\psi = \int\limits_X \psi \, d \nu$$
for any $\psi\in C$.
There may be three options.
\begin{enumerate}
\item The signed measure $\nu$ can be represented as a difference
$\nu_+-\nu_-$ of two mutually singular finite Borel measures.
\item $\nu=\nu_+$ is a finite Borel measure.
\item $\nu=-\nu_-$ where $\nu_-$ is a finite Borel measure.
\end{enumerate}

We consider, without loss of generality, the first option. Proceeding, if necessary, to a renormalization of the signed measure $\nu$, we may assume that $\nu_+$ is a probability measure, and $\int\limits_X \varphi \,d\nu_+\neq 0$. This is because $J\psi \neq 0$, so, at least, one of integrals $\int\limits_X \varphi \,d\nu_\pm$ is non-zero.

On the other hand, $J$ vanishes on $H_C$, so we have
$$\int\limits_X \psi \,d \nu=\int\limits_X \psi\circ T\, d\nu, \qquad \forall \psi\in C
$$
and a similar property is true for the component $\nu_+$, see Lemma \ref{lemm_inv}. Therefore, the latter measure is invariant. Meanwhile,
$$1=J\varphi=\int\limits_X\psi\, d\nu =\int\limits_X\psi\, d\nu_+-\int\limits_X\psi\, d\nu_-.$$
So,
$$\int\limits_X\psi\, d\nu_+>0$$
and, due to \eqref{eqintphimu}, the measure $\nu_+$ cannot be represented as a convex combination of measures $\mu_1,\ldots,\mu_m$, a contradiction. Therefore, $n\le m$.
\end{proof}

\begin{corollary} The mapping $T$ is uniquely ergodic if and only if
$$C=H_C\oplus \langle 1 \rangle.$$
\end{corollary}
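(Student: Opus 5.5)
This corollary follows from the preceding theorem in the case $m=n=1$, but I would write out both directions directly, since the same tools (Riesz representation, Hahn--Banach, Lemma \ref{les1}, Lemma \ref{lemm_inv}) give short arguments. One preliminary fact is used throughout. A continuous map of a compact metric space always has at least one invariant Borel probability measure, by Krylov--Bogolyubov. So the constant $1$ is never in $H_C$, by Lemma \ref{les1}, and the sum $H_C+\langle 1\rangle$ is always direct. Hence the statement is really about whether $H_C+\langle 1\rangle$ equals $C$.

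\emph{($\Leftarrow$)} Assume $C=H_C\oplus\langle 1\rangle$, and let $\mu_1,\mu_2$ be invariant probability measures. Take any $\varphi\in C$ and write $\varphi=\psi+c$ with $\psi\in H_C$ and $c\in\mathbb R$. By \eqref{eqs1} and continuity of integration against a finite measure in the sup norm, $\int\psi\,d\mu_i=0$. Therefore $\int\varphi\,d\mu_1=c=\int\varphi\,d\mu_2$. By the Riesz representation theorem, $\mu_1=\mu_2$, so $T$ is uniquely ergodic.

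\emph{($\Rightarrow$)} Assume $T$ is uniquely ergodic with invariant measure $\mu$, and suppose $H_C\oplus\langle1\rangle\neq C$. This subspace is closed, being a closed subspace plus a finite-dimensional one. By Hahn--Banach there is a nonzero continuous functional $J$ on $C$ that vanishes on $H_C\oplus\langle 1\rangle$. By Riesz, $J\psi=\int\psi\,d\nu$ for some nonzero signed Borel measure $\nu$, with $\nu(X)=J1=0$ and $\int\psi\,d\nu=\int\psi\circ T\,d\nu$ for all $\psi\in C$. I then take the Jordan decomposition $\nu=\nu_+-\nu_-$. Both parts are nonzero, since $\nu\neq0$ and $\nu(X)=0$, and they have equal mass.

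The key step is to show that $\nu_\pm$ are $T$-invariant. I would apply Lemma \ref{lemm_inv} with $\mu:=|\nu|/|\nu|(X)$ and $\psi:=d\nu/d|\nu|$, which is bounded by $1$ in absolute value and hence lies in $\mathbb L^2_\mu$. The lemma requires $\psi$ to be orthogonal to $H$, that is, $\int\varphi\psi\,d\mu=\int(\varphi\circ T)\psi\,d\mu$ for bounded Borel $\varphi$. I expect this to be the main obstacle, because $J$ a priori vanishes only on $H_C\subset C$, whereas $H$ involves bounded Borel $\varphi$. I would handle it by noting that $\varphi-\varphi\circ T\in H_C$ for every $\varphi\in C$. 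So the identity holds for continuous $\varphi$, and it extends to bounded Borel $\varphi$ by a monotone class argument, exactly as is done inside the proof of Lemma \ref{lemm_inv}.

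Once invariance is established, $\nu_+/\nu_+(X)$ and $\nu_-/\nu_-(X)$ are mutually singular invariant probability measures. This contradicts unique ergodicity, so $C=H_C\oplus\langle 1\rangle$.
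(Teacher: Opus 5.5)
Your argument is correct and is essentially the paper's: the corollary is meant as the case $m=n=1$ of the preceding theorem, and your two directions are that theorem's two parts specialized. You add two helpful details the paper leaves implicit: Krylov--Bogolyubov (needed for existence of an invariant measure and for $1\notin H_C$), and the explicit choice $\mu=|\nu|/|\nu|(X)$, $\psi=d\nu/d|\nu|$ that makes Lemma~\ref{lemm_inv} applicable. The step you flag as the main obstacle is in fact immediate: $H\subset H_C$ consists of continuous functions on which $J$ vanishes, so $\psi\perp H$ holds directly, and the passage to bounded Borel functions is already carried out inside the proof of Lemma~\ref{lemm_inv}.
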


\begin{example} For an irrational rotation of a circle, any function with a zero average over the circle is of the class $H_C$. The proof is based on the Fourier decomposition (similarly to the proof of the Kronecker - Weyl Theorem).
\end{example}

\begin{example} Let $T:S^1\mapsto S^1$ be the circle doubling map defined by the formula $Tz=z^2$. The mapping $T$ admits infinitely many periodic points, so the codimension of $H_C$ in C is infinite. Meanwhile, $T$ preserves the Lebesgue measure $\mathbf{m}$ on the circle which is ergodic, so the closure of $H$ in $\mathbb{L}^2_{\mathbf{m}}$ has codimension 1 (in fact, this is the space of all the functions with zero average).
\end{example}

\section{Bifurcations, chaotic invariant sets and the Misiurewicz problem.}

The techniques developed in the previous section, allows for a different perspective on bifurcations and chaos. Later on, we study smooth mappings on segments, line or smooth manifolds. Let $T:M\mapsto M$ be a smooth map where $M$ is one of the above sets.

\begin{definition} Let $X$ be a compact invariant subset of the mapping $T$. We say that the set $X$ is \emph{$H$-stable} if there exists an $\varepsilon>0$ and a neighbourhood $U$ of the set $X$ such that for any mapping $S:M\mapsto M$, which is $\varepsilon$ - close to $T$ in the $C^1$ topology at $U$, the neighbourhood $U$ contains a maximal invariant set $Y$. Moreover, there exists an isomorphism of the spaces of continuous functions $C(X)$ and $C(Y)$ which maps $H_C(X,T)$ onto $H_C(Y,S)$. Note that, due to the Stone-Banach theorem, the sets $X$ and $Y$ must be homeomorphic.

If $X$ is not $H$-stable, we say that an $H$ -- bifurcation is observed.
\end{definition}

Observe that the classical one-dimensional bifurcations (saddle-node, pitchfork, period-doubling and Hopf bifurcation) can be regarded as local $H$ - bifurcations.

\begin{definition} We say that the compact invariant set $X$  of the map $T$ is $H$-chaotic if there exists a Borel probability invariant measure $\mu$ supported on the set $K=\mathrm{supp}\, \mu$ such that
\begin{enumerate}
\item $H_{{\mathbb L}^2_\mu}\oplus \langle 1\rangle={\mathbb L}^2_\mu$;
\item The codimension of the $H_{C(K)}$ in $C(K)$ is infinite.
\end{enumerate}
\end{definition}

This means that the measure $\mu$ is ergodic but there are infinitely many ergodic invariant measures supported on subsets of $K$.
This definition is satisfied for the classical examples of chaos like hyperbolic invariant sets, satisfying Devaney's definition.

And now we demonstrate how the Problem 1 formulated in the Introduction can be partially resolved by using the shift functions techniques.

The following statement is obvious.

\begin{theorem}
Let $X$ be the closure of the periodic points of the map $T$. Suppose that the map admits the orbits of arbitrarily large periods.  Let $x-a\in H_C$ for some $a\in {\mathbb R}$. Then $T\in {\mathcal M}$ and $a$ is the mean value of those periodic orbits.
\end{theorem}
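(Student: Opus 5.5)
The plan is to apply Lemma \ref{les1} to the uniform probability measure carried by a periodic orbit. The function $\psi(x)=x-a$ lies in $H_C$, where $H_C$ is understood on $X$, the closure of the periodic points, with $T|_X$. Lemma \ref{les1} then gives $\hat\psi=0$ for every $T$-invariant Borel probability measure supported on $X$. The whole argument consists of computing this Birkhoff average for measures on periodic orbits.

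\textbf{Step 1.} Take a periodic point $p$ of minimal period $k$, with orbit $p, Tp,\ldots,T^{k-1}p$. Define the measure
$$\mu_p=\frac1k\sum_{j=0}^{k-1}\delta_{T^jp}.$$
This is a $T$-invariant Borel probability measure supported on $X$, and it is ergodic, so its Birkhoff averages are constant $\mu_p$-almost everywhere.

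\textbf{Step 2.} Rather than invoking the Birkhoff theorem, I will argue directly. Since $\psi\in H_C$, for every $\varepsilon>0$ there is $\varphi\in E$ with $\|\psi-(\varphi-\varphi\circ T)\|_C<\varepsilon$. Summing along the orbit, the sum $\sum_{j=0}^{k-1}\bigl(\varphi(T^jp)-\varphi(T^{j+1}p)\bigr)$ telescopes to $0$, because $T^kp=p$. Consequently
$$\Bigl|\frac1k\sum_{j=0}^{k-1}(T^jp-a)\Bigr|<\varepsilon.$$
Letting $\varepsilon\to0$ shows that the mean value $\frac1k\sum_{j=0}^{k-1} T^jp$ of the orbit equals $a$. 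This is exactly equation \eqref{eqs1} applied to $\mu_p$, combined with the continuity of the integral in the sup norm.

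\textbf{Step 3.} Since this holds for every periodic orbit, all periodic orbits have the same mean value $a$. Together with the hypothesis that $T$ has orbits of arbitrarily large period, this is precisely the defining property of the class $\mathcal M$, so $T\in\mathcal M$.

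The only point needing care is the space on which $H_C$ is taken. If $\varphi$ is merely bounded and Borel on $X$, the telescoping identity still holds pointwise on each periodic orbit, so no regularity of $\varphi$ is needed. Approximation in the sup norm on $X$ suffices because every periodic point lies in $X$. I expect no genuine obstacle.
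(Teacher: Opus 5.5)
Your proof is correct and is essentially the paper's argument. The paper simply calls the statement obvious, with the implicit justification being Lemma \ref{les1}: integrals over invariant measures, here the uniform measure on a periodic orbit, vanish on $H_C$ by \eqref{eqs1}. Your telescoping-plus-sup-norm-approximation computation spells out exactly that step.
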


In the end of this section, we provide a simple result which allows to check if a given function belongs to the space $H$ (also, the lemma below demonstrates that the space $H$ may be reacher than the space that of functions $g$ for which the equation $f\circ T-f=g$ is solvable).

Let $T\in C(X\mapsto X)$ be a minimal uniquely ergodic topological dynamical system with the Borel probability invariant measure $\mu$.

\begin{definition}
For a function $f\in C(X)$ define the coboundary of $f$ by
$$\mbox{co}\,(f) = \sup\limits_{n\in {\mathbb N},x \in X}
\left| S_n(f)(x) - n\int\limits_X f\, d\mu\right|$$
where
$$S_n(f):= \sum_{i=0}^n f \circ T^i.$$
\end{definition}

The following equivalence property is from statements (1), (3), (4) in  \cite[Theorem 14.11]{gotthed:55:topdyn}

\begin{lemma} If $(X, T)$ is a minimal topological dynamical system and $g\in C(X)$, then the following statements
are equivalent:
\begin{enumerate}
\item there exists some $f\in C(X)$, such that $f\circ T - f = g$;
\item there exists some $x_0 \in X$, such that the sequence
$$\{S_n(g)(x_0)\}_{n\in {\mathbb N}}$$ is bounded.
\item $\{S_n(g)\}_{n\in {\mathbb N}}$ is uniformly bounded in $X$, i.e., $$\sup_{n\in {\mathbb N},x\in X}|S_n(g)(x)| < +\infty.$$
\item The coboundary of $\mathrm{co}\, g$ of the function $g$ is finite.
\end{enumerate}
\end{lemma}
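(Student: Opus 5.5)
This is the Gottschalk–Hedlund theorem. I will prove the cycle of implications (1)$\Rightarrow$(3)$\Rightarrow$(2)$\Rightarrow$(1), and handle (4) separately.

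\textbf{The easy implications.} For (1)$\Rightarrow$(3) the sum telescopes:
$$S_n(g)=\sum_{i=0}^n (f\circ T^{i+1}-f\circ T^i)=f\circ T^{n+1}-f,$$
so $|S_n(g)|\le 2\|f\|_\infty$ uniformly in $n$ and $x$. The implication (3)$\Rightarrow$(2) is trivial.

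\textbf{Equivalence with (4).} If (3) holds, then $\int g\,d\mu=\lim_n S_n(g)(x)/n=0$ for any invariant $\mu$, by Birkhoff averaging together with the uniform bound. Hence $\mathrm{co}\,(g)=\sup_{n,x}|S_n(g)(x)|<\infty$. Conversely, suppose $\mathrm{co}\,(g)<\infty$. Integrating $S_n(g)$ against $\mu$ gives $(n+1)\int g\,d\mu$, and comparing this with $n\int g\,d\mu$ forces $\int g\,d\mu=0$ (strictly, the paper's normalization is off by one term, but this only changes things by a bounded amount). The boundedness of the coboundary then gives (3).

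\textbf{The main step, (2)$\Rightarrow$(1).} Consider the skew product
$$F:X\times\mathbb R\to X\times\mathbb R,\qquad F(x,t)=(Tx,\,t+g(x)).$$
The $F$-orbit of $(x_0,0)$ is $(T^nx_0,\,S_{n-1}(g)(x_0))$, so it lies in a compact set. Its $\omega$-limit set $K$ is therefore nonempty, compact and $F$-invariant. By Zorn's lemma I take a minimal closed $F$-invariant subset $M\subset K$. The projection of $M$ onto $X$ is closed and $T$-invariant, so by minimality of $T$ it is all of $X$.

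Next I show that $M$ is a graph. The vertical translations $R_s(x,t)=(x,t+s)$ commute with $F$, so each $R_s(M)$ is again a minimal set. Two minimal sets are either equal or disjoint. If $(x,t),(x,t+s)\in M$ with $s\neq 0$, then $R_s(M)\cap M\neq\emptyset$, hence $R_s(M)=M$, and iterating gives $R_{ks}(M)=M$ for all $k$. This contradicts the boundedness of $M$. So $M$ is the graph of a function $f:X\to\mathbb R$.

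A graph that is compact has closed graph with compact range, so $f$ is continuous. Finally, invariance of $M$ under $F$ reads $f(Tx)=f(x)+g(x)$, which is exactly (1).

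The main obstacle is the graph argument in (2)$\Rightarrow$(1): the minimal-set and translation trick, plus getting continuity of $f$ from compactness of its graph. Everything else is bookkeeping.
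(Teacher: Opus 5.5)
Your cycle (1)$\Rightarrow$(3)$\Rightarrow$(2)$\Rightarrow$(1) is correct: it is the standard skew-product proof of the Gottschalk--Hedlund theorem (orbit bounded, minimal subset, vertical-translation trick, closed graph). The paper gives no proof of its own; it only cites Gottschalk--Hedlund, Theorem 14.11. The genuine gap is in your (4)$\Rightarrow$(3) step. Integrating $|S_n(g)(x)-n\int_X g\,d\mu|\le \mathrm{co}\,(g)$ against $\mu$ yields only $|\int_X g\,d\mu|\le \mathrm{co}\,(g)$ under the paper's normalization. Under the standard normalization $\sum_{i=0}^{n-1}$ the integrated quantity is identically $0$. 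In neither case is $\int_X g\,d\mu=0$ forced. The point is that $\mathrm{co}$ subtracts off exactly the mean, so its finiteness carries no information about the mean. The off-by-one is not a harmless bounded error here; it is what makes your comparison vacuous.

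No argument can close this step, because the implication is false as stated. Take $g\equiv 1$ on any minimal system. Then $S_n(g)-n=1$, so $\mathrm{co}\,(g)=1<\infty$. But $S_n(g)=n+1$ is unbounded, and $f\circ T-f\equiv 1$ is impossible for bounded $f$, so (3) and (1) both fail. What is true is the following. Put $c=\int_X g\,d\mu$ and $g_0=g-c$; then $S_n(g)-nc=S_n(g_0)+c$. Hence $\mathrm{co}\,(g)<\infty$ if and only if $g_0$ (not $g$) satisfies (1)--(3), which you get by applying your skew-product argument to $g_0$. So item (4) must be read in one of two corrected ways. Either add the hypothesis $\int_X g\,d\mu=0$, in which case (4) is literally (3) and your argument covers both directions. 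Or replace item (1) by ``$g-\int_X g\,d\mu$ is a coboundary''.
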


\section{One-dimensional maps with fixed mean values of periodic orbits}

Define a specific class of one-dimensional systems.

\begin{definition}\label{dm}
Let $I$ be a convex subset of $\mathbb R$. We say that a measurable mapping $f:I \mapsto I$ is of the class \emph{${\mathcal M}'$} if there exists a constant $b\in {\mathbb R}$ such that for any Borel probability $f$-invariant measure $\mu$ we have
\begin{equation}\label{eqb}
\int\limits_{I} x \, d\mu =b.
\end{equation}
In particular, for any $n$-periodic point $x_0$ (i.e. $f^n(x_0)=x_0$), we have
\begin{equation}\label{eqb1}
\dfrac1{n} \sum_{j=0}^{n-1} f^j(x_0) =b.
\end{equation}
If we need to specify the constant $b$ in conditions \eqref{eqb} and \eqref{eqb1}, we say that the mapping $f\in {\mathcal M}'_b$.
\end{definition}

In \cite[Corollary 3.5]{Misiurewicz1} it was
proven that $f_{a,b}\in {\mathcal M}'_b$
where $f_{a,b}$ is the replicator map considered in following sections.
This solves a known question from \cite{Misiurewicz3}: whether there are any smooth maps of ${\mathcal M}'$, with infinitely many periodic points other than
\begin{equation}\label{aeminusx}
x\mapsto Axe^{-x}.
\end{equation}
Besides, an open question was formulated: to find all smooth maps having infinitely many periodic orbits for which the centers of mass of all periodic orbits coincide.

We are still quite far from being sure to construct all the possible maps of ${\mathcal M}'$. However, in this section, we describe a very broad class of such mappings.

Let us start with a trivial observation.

\begin{prop}
Let $f:{\mathbb R} \to {\mathbb R}$ be a continuous mapping of a convex subset of a line. If there is a piecewise continuous function $h: I \mapsto {\mathbb R}$ and constant $a,b\in {\mathbb R}$, $a>0$ such that
\begin{equation}\label{eqh}
h(f(x))-h(x)=a(x-b)
\end{equation}
then $f\in {\mathcal M}_b$.
\end{prop}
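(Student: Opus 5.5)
The plan is to show directly that $x-b$ is a shift function, i.e. lies in the space $H$ from \eqref{eqh1}, and then conclude with Lemma \ref{les1}. Equation \eqref{eqh} gives
$$x-b=\frac1a\bigl(h(f(x))-h(x)\bigr)=\varphi(x)-\varphi(f(x)),\qquad \varphi:=-\frac{h}{a}.$$
So $x-b$ has the form $\varphi-\varphi\circ f$. Here $x-b$ is continuous, and $\varphi$ is piecewise continuous, hence Borel measurable. Since $a>0$ we may divide by $a$; the sign of $a$ plays no role in this part.

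For an invariant probability measure $\mu$, integrate the identity. Invariance gives $\int\varphi\circ f\,d\mu=\int\varphi\,d\mu$, so $\int(x-b)\,d\mu=0$, i.e. $\int x\,d\mu=b$. This is exactly \eqref{eqb}. The periodic case \eqref{eqb1} is even more direct and needs no integrability at all. If $f^n(x_0)=x_0$, sum \eqref{eqh} along the orbit $x_j=f^j(x_0)$:
$$\sum_{j=0}^{n-1}\bigl(h(x_{j+1})-h(x_j)\bigr)=h(x_n)-h(x_0)=0=a\sum_{j=0}^{n-1}(x_j-b).$$
Since $a\neq 0$, the orbit mean equals $b$. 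This already gives the property defining $\mathcal M$ in Problem 1, namely that all periodic orbits have the same mean value $b$. For the ergodic-measure version one can say the same thing through Lemma \ref{les1}.

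The main obstacle is the hypothesis of Lemma \ref{les1}, which requires $\varphi\in E$, i.e. $\varphi$ bounded, on a compact space. Here $I$ may be noncompact and $h$ unbounded; for example $h(x)=\ln\frac{1-x}{x}$ blows up at the endpoints. I would handle this in two ways.

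\begin{enumerate}
\item For periodic orbits, the telescoping argument above bypasses boundedness completely.
\item For a general invariant $\mu$, I would first treat ergodic $\mu$ and then pass to general $\mu$ by ergodic decomposition. For ergodic $\mu$, I would use Birkhoff sums: $\frac1n\sum_{j<n}(f^j(x)-b)=\frac{h(f^n x)-h(x)}{na}$. The right-hand side tends to $0$ along a subsequence of $n$ for $\mu$-a.e. $x$, since by Poincar\'e recurrence $f^n x$ returns infinitely often to a set where $|h|$ is bounded by a constant. Meanwhile the left-hand side converges to $\int x\,d\mu-b$, provided $x\in\mathbb L^1_\mu$, which holds automatically when $\mu$ lives on a compact invariant subset.
\end{enumerate}

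Only case~2 needs care; the rest is routine.
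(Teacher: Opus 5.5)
Your proof is correct and is essentially the paper's argument: the paper also iterates \eqref{eqh} to get $h(f^n(x))-h(x)=a\sum_{j=0}^{n-1}(f^j(x)-b)$, divides by $n$, and concludes that the orbit average is $b$ wherever $h(f^n(x))$ is bounded or $o(n)$, which covers periodic orbits exactly as your telescoping step does. For arbitrary invariant measures, your Poincar\'e-recurrence step (returns to a set $\{|h|\le M\}$ of positive measure, combined with Birkhoff's theorem when $x\in\mathbb L^1_\mu$) supplies the justification the paper leaves implicit, and it rightly replaces your first naive integration of $\varphi-\varphi\circ f$, which is only valid when $h\in\mathbb L^1_\mu$.
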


In fact, we can always take $a=1$ rescaling the function $h$.

Equation \eqref{eqh} implies that for any $n\in {\mathbb N}$ we have
$$h(f^n(x))-h(x)=a(x+f(x)+\ldots+f^{n-1}(x))-nab.$$

The latter equation implies the equality
$$\lim_{n\to \infty} \dfrac{1}{n} (x+f(x)+\ldots+f^{n-1}(x))=b$$
for all $x$ such that $h(f^n(x))$ is bounded (or, at least, $h(f^n(x))=o(n)$).

\begin{rem}
    Observe that if the mapping $h$ is invertible, equation \eqref{eqh} implies that
    any map
    \begin{equation}\label{eqgg}
    f(x)=h^{-1}(h(x)+a(x-b))\in {\mathcal M}_b.
    \end{equation}
    If
    $$h(x)=\ln\left(\dfrac{1-x}{x}\right),$$ the formula \eqref{eqgg} gives us the mapping $f_{a,b}$ introduced above; for $h(x)=-\ln x$, we get the mapping \eqref{aeminusx} with $A=e^b$. However, equation \eqref{eqgg} can generate multiple examples of mappings from ${\mathcal M}_b$ (also having some other similar properties). For example, one can consider the family $$\arctan (\tan x - a(x-b))$$
    or take $h=\Phi^{-1}$ where $\Phi$ is the cumulative function of the normal distribution.
\end{rem}

\begin{rem}
Theorem \ref{Theorem1}, given below, guarantees that the replicator map $f_{a,b}$ has a hyperbolic chaotic invariant set for some values  $a=a_0$, $b=b_0$. Then we can take
$${\tilde f}_h=h^{-1}(h(x)+a_0(x-b_0))$$
for some $h$, $C^1$ close to $\ln((1-x)/x)$ in the absorbing segment of $f_{a_0,b_0}$ (actually, this is $[f_{max},f_{min}]$). We get a map of the class $\mathcal M$ that still has a chaotic invariant set and, consequently, infinitely many periodic points.
\end{rem}

\section{Basic properties of replicator maps}
We consider the following so-called replicator maps defined by formula  
\begin{equation}\label{equ:Boltzmann}
f_{C,V,\gamma}(x)=\frac{x}{x+(1-x) e^{\frac{C\gamma}2 \left(x-\frac{V}{C}\right)}}
\end{equation}

In this section, we will discuss some basic properties of the family of replicator maps \eqref{equ:Boltzmann}. More details and references about applications of such maps (and, also, of conjugated maps \eqref{eqeos1}, see below) may be found in, \cite{Misiurewicz3}, \cite{EOS96}, cite{aks} and, also, in the preprint \cite{chmara_et_al}.

\subsection{Reparametrization}
It is easy to see that, in fact, the replicator map depends on only two parameters: $a:=c\gamma/2$ and $b:=v/c$. So, we rewrite \eqref{equ:Boltzmann} in the form \eqref{equ:Boltzmann0}, given in the Introduction
with parameters $a>0$ and $b\in (0,1)$. See Figure \ref{fig_1} for the sample graph of $f_{a,b}$.

\begin{figure}[t!]\centering
  \includegraphics[width=.5\textwidth]{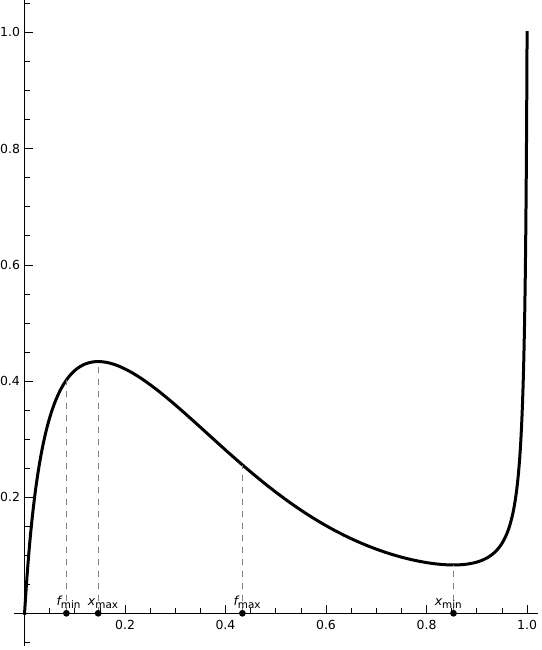}
\caption{\footnotesize The map $f_{8,1/3}$.}
\label{fig_1}
\end{figure}

\subsection{Fixed points}

Observe that the map $f$ has exactly three fixed points $0$, $b$, and $1$. To study their stability, let us calculate
\begin{equation}\label{eq0}
f'_{a,b}(x)=\dfrac{e^{a\left(x-b\right)}\left(a x^2-a x+1\right)
}{\left(x+(1-x)e^{a\left(x-b\right)}\right)^2}.
\end{equation}
Consequently,
$$f'_{a,b}(0)=e^{a b}>1, \qquad f'_{a,b}(1)=e^{a (1-b)}>1.$$
Evidently, both points $0$ and $1$ are unstable.

Meanwhile
$$f'_{a,b}\left(b\right)=1-a b(1-b)<1$$
Consequently, the point $b$ is stable if $f'(b)>-1$, that is
$$a<\dfrac{2}{b(1-b)},$$
and unstable if
$$a>\dfrac{2}{b(1-b)}.$$

In addition, from Equation \eqref{eq0} it follows that the map $f_{a,b}$ is monotone if $a\le 4$ (so its dynamics is trivial). In what follows, we always assume that \begin{equation}\label{eq1}
a>4.
\end{equation}

Moreover, in \cite[Theorem 3.8]{Misiurewicz1} it was shown that the fixed point $b$ attracts all the points of the interval $(0,1)$ provided in $a\le 2/b(1-b)$.

\subsection{Recursive formula}

It follows from \cite[Equation (13)]{Misiurewicz1} that the $n$-th iteration of our map $f_{a,b}$ is given by the formula:
$$f_{a,b}^n(x)=\frac{x}{x+(1-x) e^{a\left(\sum_{i=0}^{n-1}\left(f_{a,b}^i(x)-b\right)\right)}},\quad \forall x \in[0,1], \quad \forall n \in {\mathbb N}.$$

\subsection{Critical points}

Let us observe, first of all, that  if \eqref{eq1} is true, the mapping
$f_{a,b}$ has exactly two critical points:
$$
x_{max} = \dfrac12-\sqrt{\dfrac14-\dfrac1{a}}
\quad\mbox{and}\quad
x_{min} = \dfrac12+\sqrt{\dfrac14-\dfrac1{a}}.
$$
Evidently, $x_{min}+x_{max}=1$ and $x_{min} x_{max}=1/a$. This gives us the following asymptotic estimates for those values:
$$
x_{max}=\dfrac1{a}+o\left(\dfrac1{a}\right), \qquad
x_{min}=1-\dfrac1{a}+o\left(\dfrac1{a}\right).
$$
Besides, we can estimate
$$
f_{min}:=f_{a,b}(x_{min})=\dfrac{1}{1+(1/x_{min}-1)e^{a(x_{min}-b)}}=e^{a(b-1)+o(a)},
$$
$$
f_{max}:=f_{a,b}(x_{max})=\dfrac{1}{1+(1/x_{max}-1)e^{a(x_{max}-b)}}=1-e^{-ab+o(a)}
$$
as $a\to +\infty$.

\subsection{Symmetry}

\begin{lemma}\label{lemma1} \cite[Equation (10)]{Misiurewicz1}.
For any $x,b\in [0,1]$ and $a>0$, we have
$$f_{a,1-b}(x)=1-f_{a,b}(1-x).$$
\end{lemma}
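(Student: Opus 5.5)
The identity $f_{a,1-b}(x)=1-f_{a,b}(1-x)$ is purely algebraic, so I will verify it by direct substitution into formula \eqref{equ:Boltzmann0}, working with reciprocals to avoid clutter. The point is that $x\mapsto 1-x$ changes the sign of the exponent $a(x-b)$, once $b$ is replaced by $1-b$.

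First compute $f_{a,b}(1-x)$. Since
$$
(1-x)-b=-(x-(1-b)),
$$
we get
$$
f_{a,b}(1-x)=\frac{1-x}{(1-x)+x\,e^{-a(x-(1-b))}}.
$$
Put $E:=e^{a(x-(1-b))}$. Then
$$
1-f_{a,b}(1-x)=\frac{x\,E^{-1}}{(1-x)+x\,E^{-1}}.
$$
Multiplying the numerator and the denominator by $E$ gives
$$
1-f_{a,b}(1-x)=\frac{x}{x+(1-x)E},
$$
which is exactly $f_{a,1-b}(x)$ by \eqref{equ:Boltzmann0}.

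This holds for every $x\in[0,1]$, $b\in[0,1]$, $a>0$. The denominators are never zero because $x$ and $1-x$ are nonnegative, not both zero, and $E>0$. At the endpoints $x=0$ and $x=1$ both sides equal $0$ and $1$ respectively, which is consistent with the formula.

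There is no real obstacle. The only care needed is keeping the sign of the exponent straight when passing to the complement $1-f$. Alternatively, one can argue via the representation \eqref{eqgg} with $h(x)=\ln\frac{1-x}{x}$: since $h(1-x)=-h(x)$, conjugation by $x\mapsto 1-x$ turns $h(f(x))-h(x)=a(x-b)$ into the same relation with $b$ replaced by $1-b$.
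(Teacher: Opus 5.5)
Your direct substitution is correct, including the check that the denominators never vanish and the endpoint cases; the paper gives no argument of its own beyond citing \cite[Equation (10)]{Misiurewicz1}, and this short computation is all the identity needs. Your alternative via $h(1-x)=-h(x)$ is also valid, but only on the open interval $(0,1)$ where $h$ is defined and bijective, so the endpoints $x=0,1$ still rest on the direct check.
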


This means that the transformation $x\mapsto 1-x$ converts the map $f_{a,b}$ to $f_{a,1-b}$.
In particular,
$$f^n_{a,1-b}(x)=1-f^n_{a,b}(1-x)$$
for any $n\in {\mathbb N}$. So, it suffices to study the case $b\le 1/2$ only. Moreover, the case $b=1/2$ is quite trivial: there are no periodic solutions of periods higher than 2 (see \cite{Misiurewicz1}).

\subsection{Schwartzian derivative}

Recall the notion of the Schwartzian derivative:
$$Sf_{a,b}(x)=\left(\dfrac{f''_a(x)}{f'_{a,b}(x)}\right)'-\dfrac12
\left(\dfrac{f''_a(x)}{f'_{a,b}(x)}\right)^2.$$

The following technical statement is proven in \cite[Proposition 3.2]{Misiurewicz1}.

\begin{lemma}\label{Lemma5} For any $a>4$, any $b\in (0,1)$ and any $x\in (0,1)\setminus \{x_{min},x_{max}\}$ we have $Sf_{a,b}(x)<0$ while
$$Sf_{a,b}(x_{min})=Sf_{a,b}(x_{max})=-\infty.$$
\end{lemma}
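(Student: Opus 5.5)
Our plan is to reduce the sign of the Schwarzian to a sign computation for a simpler function, using conjugation by the logistic change of variables and the composition rule
\[
S(g\circ k)=(Sg\circ k)\,(k')^2+Sk .
\]
Write $f=f_{a,b}$ as $f=\sigma\circ F$, where
\[
\sigma(t)=\frac{1}{1+e^{-t}}, \qquad F(x)=\ln\frac{x}{1-x}-a(x-b).
\]
This factorization holds because
\[
h(f(x))=h(x)+a(x-b), \qquad h(x)=\ln\frac{1-x}{x},
\]
and $\sigma(t)=h^{-1}(-t)$. By the chain rule,
\[
Sf=(S\sigma\circ F)\,(F')^2+SF.
\]
A direct computation gives $\sigma'=\sigma(1-\sigma)$, which yields $S\sigma\equiv-\tfrac12<0$. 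So the first term is $\le 0$, and it is strictly negative wherever $F'\neq 0$. It therefore suffices to show that $SF<0$ on $(0,1)\setminus\{x_{min},x_{max}\}$.

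Next compute the derivatives of $F$:
\[
F'(x)=\frac{1}{x(1-x)}-a=\frac{1-ax+ax^2}{x(1-x)}.
\]
Its zeros are exactly $x_{max}$ and $x_{min}$, consistent with \eqref{eq0}. Put $u=x(1-x)$, so that $F'=\frac1u-a$. Then
\[
F''=-\frac{u'}{u^2}, \qquad F'''=-\frac{u''}{u^2}+\frac{2u'^2}{u^3},
\]
with $u'=1-2x$ and $u''=-2$. Substituting into
\[
SF=\frac{F'''}{F'}-\frac32\Big(\frac{F''}{F'}\Big)^2
\]
and clearing denominators, we aim to show that
\[
2F'F'''-3F''^2<0 .
\]
With $F'=(1-au)/u$ this becomes
\[
\frac{2(1-au)}{u}\Big(\frac{2}{u^2}+\frac{2u'^2}{u^3}\Big)-\frac{3u'^2}{u^4}.
\]
Multiplying by $u^4>0$ gives the numerator
\[
N=4u(1-au)+u'^2(4-4au-3)=4u-4au^2+u'^2(1-4au).
\]
Now use the identity $u'^2=1-4u$ to write $N$ as a polynomial in $u$ alone:
\[
N=4u-4au^2+(1-4u)(1-4au)=1-4au+12au^2-4au^2\cdot 0\ldots
\]
The final arithmetic step is where care is needed, and it is the main obstacle: one must verify that $N<0$ for all $u\in(0,\tfrac14]$ when $a>4$. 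This fails near $u\to0$, where $N\to 1>0$, so the bare inequality $SF<0$ is false there.

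Hence I would not require $SF<0$ pointwise. Instead I would argue with the full sum
\[
Sf=-\tfrac12 F'^2+SF,
\]
multiplied by $2F'^2u^4$. The target inequality is then
\[
-F'^4u^4+u^2\,(2F'F'''-3F''^2)\,u^2<0,
\]
that is,
\[
-(1-au)^4+N<0 .
\]
With $N$ computed exactly as a polynomial in $u$, this is a one-variable polynomial inequality on $(0,\tfrac14]$ with parameter $a>4$. I would check it by expanding. At $u\to0$ it reduces to $-1+1=0$, so the next-order term decides the sign, and I would verify that term is negative. The remaining range would then be handled by monotonicity or by completing squares.

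At the critical points $F'=0$ while $F''\neq0$, so $Sf=-\infty$ directly from the formula. Lemma \ref{lemma1} is not needed. The independence of the result from $b$ is automatic, because $b$ enters $F$ only as an additive constant.
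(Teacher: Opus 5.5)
Your setup is sound and gives a self-contained route; the paper itself does not prove this lemma, it only quotes it from the literature. The factorization $f=\sigma\circ F$, the identity $S\sigma\equiv-\frac12$, the derivative formulas, and the reduction $2F'^2u^4\,Sf=-(1-au)^4+N$ with $N=4u-4au^2+u'^2(1-4au)$ are all correct. You are also right to abandon the claim that $SF<0$ suffices.

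The gap is that the proposal stops exactly at the step that carries the whole content of the lemma. $N$ is never simplified: the displayed line is garbled, and in fact $N=1-4au+12au^2$. The polynomial inequality is only announced (``I would check it by expanding''), not established.

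Your sketch of how to check it also looks in the wrong place. At $u\to0$ the linear terms cancel as well, so there is no first-order term to decide anything, and after factoring out $u^2$ that end is harmless. The delicate end is $u=\frac14$, i.e.\ $x=\frac12$, where the inequality degenerates to an equality as $a\downarrow4$. Indeed, for $0<a<4$ one has $Sf(\frac12)>0$. So the argument must use both $a>4$ and the constraint $u\le\frac14$, and a generic appeal to ``monotonicity or completing squares'' does not show how.

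The missing computation is short:
\[
-(1-au)^4+N=au^2\bigl(12-6a+4a^2u-a^3u^2\bigr)=au^2\bigl(12-2a-a(au-2)^2\bigr),
\]
so $Sf=\dfrac{a\,(12-6a+4a^2u-a^3u^2)}{2(1-au)^2}$ with $u=x(1-x)\in(0,\frac14]$.

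For $a\ge8$ the bracket is at most $12-2a<0$. For $4<a<8$ one has $au\le\frac a4<2$, so the bracket is increasing in $u$. It is therefore bounded by its value at $u=\frac14$, namely $-\frac1{16}(a-4)(a^2-12a+48)<0$, since the quadratic factor has negative discriminant. This yields $Sf<0$ away from the critical points.

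At $au=1$ the numerator equals $a(12-3a)<0$ while the denominator vanishes, which confirms $Sf=-\infty$ at $x_{min},x_{max}$. Your remark that $F'=0\neq F''$ there is also fine, since $u=1/a<\frac14$ forces $x\neq\frac12$.
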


\begin{corollary}
For any $a>4$ and any $b\in (0,1)$, the mapping $f_{a,b}$ has at most 2 minimal attracting sets.
\end{corollary}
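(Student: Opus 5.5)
The corollary should follow from Lemma \ref{Lemma5} and Singer's theorem for maps of an interval with negative Schwarzian derivative. Singer's theorem says that every attracting periodic orbit, and more generally every minimal attracting set (an attracting cycle or a solenoidal/Cantor attractor), has in its immediate basin either a critical point or a boundary point of the interval that is not a repelling fixed point.

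First I would restrict to $[0,1]$. This segment is invariant, since $f_{a,b}(0)=0$, $f_{a,b}(1)=1$ and $0<f_{a,b}(x)<1$ on $(0,1)$. Lemma \ref{Lemma5} gives $Sf_{a,b}<0$ away from the critical points, with value $-\infty$ at $x_{min}$ and $x_{max}$. This is exactly the setting of Singer's theorem, because $f_{a,b}$ is $C^3$ with nondegenerate critical points. When $a>4$ the only critical points are $x_{max}$ and $x_{min}$.

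Next I would dispose of the boundary points. The endpoints $0$ and $1$ are fixed and repelling, since $f'_{a,b}(0)=e^{ab}>1$ and $f'_{a,b}(1)=e^{a(1-b)}>1$. Their immediate basins are therefore just the points themselves, and they cannot attract any orbit from $(0,1)$. Hence each minimal attractor inside $(0,1)$ must attract at least one of $x_{max}$ and $x_{min}$, and this should be checked for attractors of every type, including neutral cycles with one-sided basins.

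The final step is counting. The $\omega$-limit set of a single critical point is contained in at most one minimal attracting set, because distinct minimal attractors have disjoint basins. So the map sending each minimal attracting set to a critical point whose orbit it attracts is injective, and there are at most two such sets.

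The main obstacle is justifying the version of Singer's theorem that covers non-periodic minimal attracting sets, not only attracting cycles. I would cite the Guckenheimer/Blokh--Lyubich form for $S$-unimodal and multimodal maps with negative Schwarzian derivative, in which every metric attractor attracts a critical point. Here "minimal attracting sets" would be read in that sense.
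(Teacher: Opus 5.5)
Your proposal is correct and is essentially the paper's argument: the paper combines Lemma~\ref{Lemma5} with Singer's theorem (Devaney's Theorem 11.4) to conclude that every minimal attracting set of $f_{a,b}$ on the compact interval attracts one of the two critical points $x_{max}$, $x_{min}$. Your explicit treatment of the repelling fixed endpoints $0,1$, the injectivity count, and your remark that non-periodic attractors need the Blokh--Lyubich-type extension make precise steps the paper leaves implicit.
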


\begin{proof} This corollary follows from \cite[Theorem 11.4]{Devaney}. Indeed, in the considered case (a mapping on a compact interval), any of those sets attracts at least one critical point.
\end{proof}

\subsection*{Transformation of coordinates}\label{subsec:conjugacy}
Introduce a family of maps $g_{a,b}:\mathbb{R}\to \mathbb{R}$ by
\begin{equation}\label{eqeos1}
g_{a,b}(y):= y + \dfrac{a}{e^y+1}-ab. 
\end{equation}
with $a>0$ and $b\in(0,1)$. See Figure \ref{fig_2} for the graph of $g_{30,1/3}$. 

\begin{figure}[t!]\centering
  \includegraphics[width=.5\textwidth]{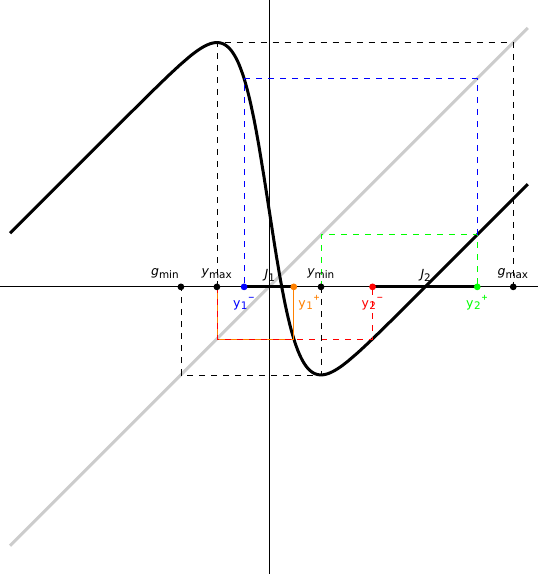}
\caption{\footnotesize The graph of the map $g_{30,1/3}$.}
\label{fig_2}
\end{figure}

This family of maps $\{g_{a,b}\}$ has been introduced in \cite{EOS96},
and has already been studied, see \cite{aks} and references therein. It was shown in \cite{Misiurewicz4} that for each $a,b$ by taking
$$y=h(x)=\ln\frac{1-x}{x}, \qquad
\left( x=\dfrac1{e^y+1} \right)$$
we have
\begin{equation}\label{eqeos2}
g_{a,b}(y)=h\circ f_{a,b}\circ h^{-1}(y). 
\end{equation}
In other words, the following diagram commutes
\[
\begin{CD}
(0,1) @>f_{a,b}(x)>> (0,1) \\
@VhVV @VhVV \\
\mathbb{R} @>g_{a,b}(y)>> \mathbb{R}
\end{CD}
\]
Therefore, $f_{a,b}$ and $g_{a,b}$ are smoothly conjugated.

Consequently, the results of the papers \cite{Misiurewicz1} and \cite{aks} can be merged. For example, one can classify the absorbing intervals of the mapping $f_{a,b}$ as it was done in \cite{aks} for $g_{a,b}$.

\section{Chaotic dynamics, hyperbolic chaos}

The main claim of this section states $f_{a,b}$ can be symbolically represented as a subshift of the Bernoulli shift of finite type (and this guarantees the existence of periodic points of any periods) in the case $b\neq 1/2$ (for $b=1/2$ there is no orbit of period greater than 2, see \cite{Misiurewicz1}).

Let $\Sigma$ be the set of all one-sided infinite sequences of symbols '0' and '1' endowed with the standard metrics:
$$\Sigma=\{\sigma=\left(\sigma_j\in \{0,1\}:j\in {\mathbb N}\cup\{0\} \right)\};$$
$$d_\Sigma(\sigma,\theta)=\sum_{j=0}^\infty \dfrac{|\sigma_j-\theta_j|}{2^j}.$$
Consider a subset $\Sigma_0\subset \Sigma$ given by the following formula:
$$\Sigma_0=\{\sigma\in \Sigma: \sigma_j\sigma_{j+1}=0,\quad \forall j\in {\mathbb N}\cup\{0\}\}.$$
In other words, the set $\Sigma_0$ consists of segments without neighboring symbols '1'.

We list some evident properties of $\Sigma_0$.
\begin{enumerate}
\item $\Sigma_0$ is an infinite closed subset of $\Sigma$ and, hence, compact. This is because the set of all sequences of $\Sigma$ with '1' at positions $j$ and $j+1$ is open for any $j$;
\item $\Sigma_0$ is invariant with respect to the standard shift mapping $S$ on $\Sigma$ (erasing the first symbol of a sequence).
\item The periodic points of the map $S$ are dense in $\Sigma_0$. This is because the periodic points may be obtained by an infinite repetition of finite sequences of digits (if this does not result in entries '11'), e.g. $010\,010 \ldots$ or $10100\,10100 \ldots$.
\item For any $m\in {\mathbb N}$ the set $\Sigma_0$ contains at least one point of period $m$ (for instance, the point corresponding to the repeating sequence $10\ldots 0$ for $m>1$ and the sequence of zeros if $m=1$).
\item The set $\Sigma_0$ is transitive with respect to $S$.
The point with a dense orbit may be obtained by writing down all the finite admissible sequences of '0' and '1', separated by '0':
$$\sigma^*= 0\, 0\, 1\, 0\, 00\, 0\, 01\, 0\, 10\, 0\, 000\, 0\, 001\,0 \, 010\, 0\, 100\, 0\, 101\, 0 \ldots$$
\end{enumerate}

\begin{theorem}\label{Theorem1} For any $b\in (0,1)$, $b\neq 1/2$, there is $a_0=a_0(b)>0$ such that for any $a>a_0$ there exists the compact set $Q\subset [f_{min},f_{max}]$, invariant with respect to $f_{a,b}$ and such that $f_{a,b}|_Q$ is topologically conjugated to the shift of finite type on the set $\Sigma_0$.
\end{theorem}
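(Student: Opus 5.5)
By Lemma \ref{lemma1} it suffices to treat $b\in(0,1/2)$: the case $b>1/2$ follows by conjugating with $x\mapsto 1-x$, which maps $[f_{min},f_{max}]$ to itself. The plan is a standard two-interval horseshoe with the ``golden mean'' transition matrix $\begin{pmatrix}1&1\\1&0\end{pmatrix}$. Its admissible itineraries are exactly the sequences in $\Sigma_0$.

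\textbf{Step 1: choice of the intervals.} Let $I_0=[b-\delta,b+\delta]$ with a small fixed $\delta>0$, for instance $\delta=\min(b,1-b)/4$. For $a$ large we have $x_{max}\approx 1/a<b-\delta$ and $x_{min}\approx 1-1/a>b+\delta$. So $I_0$ lies in the decreasing branch $(x_{max},x_{min})$. From the explicit formula,
$$f(b-\delta)=1-O(e^{-a\delta}),\qquad f(b+\delta)=O(e^{-a\delta}).$$
Hence $f(I_0)\supset[\varepsilon_a,1-\varepsilon_a]$ with $\varepsilon_a\approx e^{-a\delta}$, and this image contains $I_0$.

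On the increasing branch $(0,x_{max})$, define $I_1:=(f|_{[0,x_{max}]})^{-1}(I_0)$. Since $f(x)=x\,e^{ab}(1+O(ax))$ for small $x$, we get $I_1\approx e^{-ab}I_0$, an interval of size $\sim e^{-ab}$ lying inside $(0,x_{max})$.

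\textbf{Step 2: the covering relations.} These are $f(I_0)\supset I_0\cup I_1$ and $f(I_1)=I_0$. The only nontrivial inclusions are $I_1\subset f(I_0)$ and $I_1\subset[f_{min},f_{max}]$. Both follow from comparing exponents. For the first, $e^{-ab}\gg e^{-a\delta}$ once $\delta$ is chosen larger than $b$; so I will actually take $I_0=[b-\delta,b+\delta']$ with an asymmetric right end $\delta'>b$, still keeping $b+\delta'<x_{min}$. For the second, $f_{min}=e^{-a(1-b)+o(a)}\ll e^{-ab}$, and this holds precisely because $b<1/2$. This is where the hypothesis $b\neq 1/2$ enters. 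Consequently the invariant set
$$Q=\bigcap_{n\ge0} f^{-n}(I_0\cup I_1)$$
lies in $[f_{min},f_{max}]$. Define the itinerary map $\pi:Q\to\Sigma$ by $\sigma_j=k$ iff $f^j(x)\in I_k$. Since $f(I_1)=I_0$, the word $11$ never occurs, so $\pi(Q)\subset\Sigma_0$. Conversely, for every $\sigma\in\Sigma_0$ the nested sets $\bigcap_{j\le n}f^{-j}(I_{\sigma_j})$ are nonempty compact intervals, by the covering relations and monotonicity on each $I_k$. Thus $\pi$ is onto $\Sigma_0$ and satisfies $\pi\circ f=S\circ\pi$.

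\textbf{Step 3: injectivity and continuity.} This is the main obstacle. I will prove uniform expansion, namely $|f'|\ge\lambda>1$ on $I_0\cup I_1$, using \eqref{eq0}.

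On $I_0$, the numerator factor $ax^2-ax+1$ is of size $\sim a\,x(1-x)$. The denominator contains $e^{a(x-b)}$, and a direct estimate gives $|f'|\gtrsim a\,x(1-x)\min(e^{a(x-b)},e^{-a(x-b)})$. This is large near $b$ but can be small near the endpoints of $I_0$.

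So I would refine the choice. Take $I_0=f|_{(x_{max},x_{min})}^{-1}\bigl([c_a,\,1-c_a]\bigr)$, where $c_a$ is chosen so that this preimage still covers $I_0\cup I_1$, for example $c_a=\tfrac12 e^{-ab}$. Then $I_0$ is an interval of width $O(1)$ on which $|f'|\ge a\,c_a\cdot(\dots)$. Estimating this directly may fail near the ends.

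If direct expansion is messy, the fallback is Lemma \ref{Lemma5}. Since $Sf<0$, on any interval $J$ on which some $f^n$ is monotone the derivative has the minimum principle. If a cylinder $\bigcap_{j\le n}f^{-j}(I_{\sigma_j})$ did not shrink to a point, we would obtain a nondegenerate interval $J$ with all $f^n|_J$ monotone and $f^n(J)\subset I_0\cup I_1$. By the standard argument (\cite{Devaney}, Singer's theorem together with the Corollary on at most two attractors), $J$ would then be attracted to a periodic attractor or a homterval, which must attract a critical point. But the critical orbits leave $[f_{min},f_{max}]$-neighbourhood structure. Concretely, $f(x_{max})=f_{max}$ and $f(x_{min})=f_{min}$ lie outside $I_0\cup I_1$, since $f_{min}<I_1$ and $f_{max}>1-c_a$. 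This rules out such a $J$.

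Hence $\pi$ is injective. Continuity of $\pi$ and of $\pi^{-1}$ is then standard, because cylinders shrink to points and $Q$ is compact. This gives the conjugacy of $f_{a,b}|_Q$ with $S|_{\Sigma_0}$ for all $a>a_0(b)$.
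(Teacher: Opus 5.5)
Your Steps 1--2 are sound. They are the paper's horseshoe read in the $x$-coordinate: under $y=\ln\frac{1-x}{x}$ your $I_0$ and $I_1$ play the roles of the paper's $J_1$ and $J_2$. Your observation that $f_{min}\ll e^{-ab}$ precisely when $b<1/2$ is the content of the third inequality in (A.2), which reduces to $a(1-2b)+o(a)>0$.

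\textbf{The gap is Step 3 (injectivity of $\pi$),} which you flag but do not close.

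First, uniform expansion $|f'|\ge\lambda>1$ on $I_0\cup I_1$ is not merely messy; it is false for your intervals. From \eqref{eq0} one has the identity
\[
f'(x)=\bigl(1-ax(1-x)\bigr)\,\frac{f(x)\bigl(1-f(x)\bigr)}{x(1-x)}.
\]
On the decreasing branch $0<ax(1-x)-1<ax(1-x)$. So at the right endpoint $x_r$ of $I_0$, where $f(x_r)\le\min I_1=O(e^{-ab})$, you get $|f'(x_r)|<a f(x_r)=O(ae^{-ab})$. Your refined choice also fails: $1-f_{max}\approx ae^{1-ab}>\tfrac12 e^{-ab}$, so $1-c_a$ is not a value of $f$ on the decreasing branch, and the claim $f_{max}>1-c_a$ is false.

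Second, the negative-Schwarzian fallback does not give what you need. Passing from a nondegenerate cylinder to a periodic attractor additionally requires ruling out wandering intervals, which is a separate and deep result, not a consequence of Singer's theorem. More seriously, Singer's theorem only says that the orbit of some critical point converges to the attracting cycle. Knowing that the first iterates $f_{max}$, $f_{min}$ lie outside $I_0\cup I_1$ says nothing about later iterates. These may re-enter $I_0\cup I_1$ and be captured by a cycle lying in $Q$, so ``this rules out such a $J$'' does not follow.

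\textbf{The missing idea} is to measure expansion in the metric $|dx|/(x(1-x))$, i.e.\ in the coordinate $y=\ln\frac{1-x}{x}$. This is exactly why the paper works with $g_{a,b}$ and proves $|(g_{a,b}^2)'|>1$ on $K$ (Lemma A.4 and (A.11), using the sign of $g''_{a,b}$ to reduce to endpoint values). In your notation the fix is short, because the identity above telescopes along orbits:
\[
(f^n)'(x)=\frac{x_n(1-x_n)}{x_0(1-x_0)}\prod_{j=0}^{n-1}\bigl(1-ax_j(1-x_j)\bigr),\qquad x_j=f^j(x).
\]
The factors behave as follows:
\begin{itemize}
\item For $x_j\in I_0$, which is bounded away from $0$ and $1$, the factor has modulus at least $ca-1$, where $c=\min_{I_0}x(1-x)>0$.
\item For $x_j\in I_1$ the factor lies in $[1-O(ae^{-ab}),1]$.
\item Since $f(I_1)=I_0$, no two consecutive $x_j$ lie in $I_1$.
\end{itemize}
Hence on each cylinder $\{x:\ f^j(x)\in I_{\sigma_j},\ 0\le j\le n\}$ one has $|(f^n)'|\ge c'\lambda^n$, with $\lambda>1$ for large $a$. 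The constant $c'>0$ is of order $e^{-ab}$ and comes from the prefactor. Since $f^n$ maps this cylinder onto $I_{\sigma_n}$, its length is at most $1/(c'\lambda^n)$. So cylinders shrink to points, $\pi$ is injective, and the rest of your argument goes through.
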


\section*{Appendix A. Proof of Theorem \ref{Theorem1}}

\subsection*{Key lemma for $g_{a,b}$}
In the view of \eqref{eqeos2}, to prove Theorem \ref{Theorem1}, it suffices to prove the following lemma for the function $g_{a,b}$ defined by \eqref{eqeos1}.

\noindent\textbf{Lemma A.1} (Key Lemma).
\emph{For any $b\in (0,1)$, $b\neq 1/2$, there is $a_0=a_0(b)>0$ such that for any $a\geq a_0$ there exists the compact set $K\subset {\mathbb R}$, invariant with respect to $g_{a,b}$ and such that
\begin{enumerate}
\item $|(g^2_{a,b})'(x)|>1$ for any $x\in K$;
\item the mapping $g_{a,b}|_K$ is topologically conjugated to the Bernoulli shift of the set $\Sigma_0$.
\end{enumerate}}

\begin{proof}
We assume, without loss of generality, that $b<1/2$, we always suppose this value to be fixed.

The map $g_{a,b}$ has the unique fixed point $y_0=\ln((1-b)/b)$.

The derivative of $g_{a,b}$ is given by the formula
$$g'_{a,b}(y)=1-\dfrac{ae^y}{(e^y+1)^2}.$$
The critical points are
$$y_{max}=\ln \left(\dfrac{a}2-1-\sqrt{\dfrac{a^2}4-a}\right)$$
and
$$y_{min}=\ln \left(\dfrac{a}2-1+\sqrt{\dfrac{a^2}4-a}\right).$$
Observe that $y_{min}+y_{max}=0$ and
$$e^{y_{min}}=\dfrac{a}2-1+\sqrt{\dfrac{a^2}4-a}=a+o(a)$$
as $a\to+\infty$.
The function $g_{a,b}$ increases on $(-\infty,y_{max}]$ and $[y_{min},+\infty)$ and decreases on $[y_{max},y_{min}]$.

In our assumptions, the following inequalities hold:
$$y_{max}<0<y_0<y_{min}.$$

The second derivative of $g_{a,b}$ may be calculated by the formula:
$$
g''_{a,b}(y)=\dfrac{ae^y(e^y-1)}{(e^y+1)^3}.
\leqno (A.1)$$
Evidently, this function is positive for positive values of $y$ and vice versa. The shape of $g_{a,b}$ is as illustrated in Figure \ref{fig_2}.

Denote
$$
g_{min}:=g_{a,b}(y_{min})\qquad g_{max}:=g_{a,b}(y_{max}),
$$
then the following statement holds true.

\noindent\textbf{Lemma A.2.} \emph{The value $a_0$ can be selected sufficiently large, so that
$$
g_{\min}<y_{\max}, \qquad g_{\max}>y_{\min},\qquad g_{a,b}
\leqno (A.2)$$
for any $a\ge a_0$.}

\begin{proof}[Proof of Lemma A.2.] Let us verify Lemma A.2. item by item. The first inequality of (A.2) can be rewritten as follows
$$
\ln\left(\dfrac{a}2-1+\sqrt{\dfrac{a^2}4-a}\right)+
\dfrac{a}{\frac{a}2+\sqrt{\frac{a^2}4-a}}-ab<\ln\left(\dfrac{a}2-1-\sqrt{\dfrac{a^2}4-a}\right)
$$
or
$$
2\ln\left(\dfrac{a}2-1+\sqrt{\dfrac{a^2}4-a}\right)+
\dfrac{a}{\frac{a}2+\sqrt{\frac{a^2}4-a}}-ab<0
\leqno (A.3)$$
Here we took into account the fact that $y_{min}+y_{max}=0$. The first term of the left-hand side (A.3) equals $2\ln a + o(1)$, the second one is $1+o(1)$, and the third one is $-ab$. All the expression is negative for sufficiently big $a$ provided $b$ is positive.

The second inequality of (A.2) looks as follows:
$$\ln\left(\dfrac{a}2-1-\sqrt{\dfrac{a^2}4-a}\right)+
\dfrac{a}{\frac{a}2-\sqrt{\frac{a^2}4-a}}-ab>\ln\left(\dfrac{a}2-1+\sqrt{\dfrac{a^2}4-a}\right)
$$
$$
\dfrac{a}{\frac{a}2-\sqrt{\frac{a^2}4-a}}-ab>2\ln\left(\dfrac{a}2-1+\sqrt{\dfrac{a^2}4-a}\right)
\leqno (A.4)$$
that is $a-ab+o(a)>2\ln (a+o(a))$. Here we took into account the fact that
$$\dfrac{a}{\frac{a}2-\sqrt{\frac{a^2}4-a}}=\frac{a}2+\sqrt{\frac{a^2}4-a}=a+o(a).$$
The inequality (A.4) is obviously true for big values of $a$ since $b<1$.

Finally, the last inequalities of (A.2) can be expressed in the form
$$g_{max}+\dfrac{a}{e^{g_{max}}+1}-ab>y_{min}$$
or
$$y_{max}+\dfrac{a}{e^{y_{max}}+1}+\dfrac{a}{e^{g_{max}}+1}-2ab>y_{min}$$
or
$$2y_{max}+\dfrac{a}{e^{y_{max}}+1}+\dfrac{a}{e^{g_{max}}+1}-2ab>0.$$

Substituting the exact values of $y_{max}$ and $g_{max}$, we obtain
$$\begin{array}{c}
2 \ln \left({\dfrac{a}2-1-\sqrt{\dfrac{a^2}4-a}}\right)+\dfrac{a}{\dfrac{a}2-\sqrt{\dfrac{a^2}4-a}}+\\
\dfrac{a}{1+\left(\dfrac{a}{2}-1-\sqrt{\dfrac{a^2}{4}-a}\right) \exp\left(\dfrac{a}{a/2-\sqrt{a^2/4-a}}-ab\right)}-2ab>0.
\end{array}$$
The first summand on the last formula is $o(a)$, the second is $a+o(a)$, the third one tends to 0 and the last one is $-2ab$. Therefore, the overall sum is $a(1-2b)+o(a)$ which must be positive. We complete the proof of the lemma.
\end{proof}

\medskip

As it follows from the statement of the Lemma, $g_{min}<y_{max}<y_{min}$ and the map $g_{a,b}$  increases on $(y_{min},+\infty)$.
Consequently (as well as plotted in Figure \ref{fig_2}), there is a unique point $y_2^+$ in $(y_{min},+\infty)$ such that $g_{a,b}(y_2^+)=y_{min}$.

Now, we prove that $y_2^+\in g_{a,b}[y_{max},y_{min}]$. Evidently, $g_{min}=g_{a,b}(y_{min})<y_{min}<y_2^+$.

Now let us prove that $y_2^+\le g_{max}$. We take into account the inequality $g_{max}>y_{min}$ and the fact that $g_{a,b}$ increases for $y>y_{min}$. So, if there were $g_{max} < y_2^+$, we would have $g_{a,b}(g_{max})< g_{a,b}(y_2^+)=y_{min}$, which contradicts the last of the inequalities (A.2).

Then there exists a unique point $y_1^-\in [y_{max},y_{min}]$ such that $g_{a,b}(y_1^-)=y_2^+$.

Besides, $y_{max}\in [g_{min},g_{max}]$ for big values of $a$. So, there are unique points $y_1^+\in [y_{max},y_{min}]$ and $y_2^-\in [y_{min},+\infty)$ such that
$$g_{a,b}(y_1^+)=g_{a,b}(y_2^-)=y_{max}.$$

Since $g_{a,b}$ is decreasing on $[y_{\max},y_{\min}]$ and increasing on $[y_{\min},+\infty)$, we have $y_{1}^{-}<y_{1}^{+}$ and $y_{2}^{-}<y_{2}^{+}$. Therefore, we are able to consider two segments:
$$
J_1:=[y_1^-,y_1^+]\subset (y_{max},y_{min})~~~ \mbox{and}~~~~
J_2:=[y_2^-,y_2^+]\subset (y_{min},+\infty).
$$

These two segments are disjoint. Moreover, we have
$$
g_{a,b}(J_1)=[y_{\max},y_2^+]\supset J_1\cup J_2, ~~~  g_{a,b}(J_2)=[y_{\max},y_{min}]\supset J_1, ~~~ \mbox{and}~~~
g_{a,b}(J_2)\cap J_2=\emptyset$$
These facts immediately imply that
$$g_{a,b}^2(J_i)\supset J_1\cup J_2, \qquad i=1,2.$$
Applying $g_{a,b}$ to the latter inclusion, we can easily see that
$$g_{a,b}^m(J_i)\supset J_1\cup J_2, \qquad i=1,2$$
for any $m\ge 2$.
We set
$$
K:=\bigcap_{k=0}^{\infty} g_{a,b}^{-k} (J_1\cup J_2).
\leqno (A.5)$$

This $K$ in (A.5) is a nonempty compact invariant set. Given a point $y\in K$, we construct a sequence $\eta(y)=(\sigma_j: j\in {\mathbb N}\cup\{0\})\in \Sigma$ as follows:
$\sigma_m=\epsilon\in \{0,1\}$ if $g^m_{a,b}(y)\in J_{\epsilon+1}$.

Since $g_{a,b}(J_2)\cap J_2 =\emptyset$, we have
$\eta(y)\in \Sigma_0$ for any $y\in K$.

Now we prove that for any point of $\Sigma_0$ there exists a corresponding point of $K$.

\noindent\textbf{Lemma A.3.} \emph{Given a sequence $\Bar{\sigma}\in \Sigma_0$, there exists a point $\Bar{y}\in \Sigma_0$ such that
$$\eta(\Bar{y})=\Bar{\sigma}.$$}

\begin{proof} Let $\Bar{\sigma}=(\sigma_j:j\in {\mathbb N}\cup\{0\})$. We construct a nested sequence of segments
$$I_0\supset I_1 \supset I_2\supset \ldots$$
corresponding to $\Bar{\sigma}$. First of all, we take $I_0=J_{\sigma_0+1}$.

Let the segment $I_m$ be already constructed so that
$g_{a,b}^m(I_m)=J_{\sigma_{m}+1}$. Since $\Bar{\sigma}\in \Sigma_0$, we have $g_{a,b}(J_{\sigma_m+1})\supset J_{\sigma_{m+1}+1}$. So, we can take a subsegment $I_{m+1}\subset I_m$ so that $g_{a,b}^m(I_{m+1})=J_{\sigma_{m+1}+1}$.

We repeat this procedure infinitely many times. So, we can take any point ${\Bar{y}}\in \bigcap_{m=0}^\infty I_m$.
\end{proof}

To prove that the mapping $\eta$ is one-to-one, we demonstrate that the mapping $g_{a,b}$ is uniformly expanding on each of the sequences $J_i$.
So, it suffices to estimate the derivatives of $g_{a,b}^2$.

\noindent\textbf{Lemma A.4.} \emph{The value $a_0$ can be taken so big that
$$
\min\left\{|g'_{a,b}(y_1^-)|,|g'_{a,b}(y_1^+)|\right\}\cdot g'_{a,b}(y_2^-)>1
\leqno (A.6)$$}

\begin{proof}
Firstly, we give an estimate for $g'_{a,b}(y_2^-)$.
We have
$$
g_{a,b}(y_2^-)=y_2^-+\dfrac{a}{e^{y_2^-}+1}-ab=y_{max}.
\leqno (A.7)$$
On the other hand, $y_2^->y_{min}$, hence
$$0<\dfrac{a}{e^{y_2^-}+1}<\dfrac{a}{e^{y_{min}}+1}<2$$
for a sufficiently big $a$.

Therefore, using (A.7) and the last inequality, we get $e^{y_2^-}>e^{y_{max}} e^{ab-2}$,
$$\dfrac{ae^{y_2^-}}{(e^{y_2^-}+1)^2}<ae^{-y_2^-}<ae^{-y_{max}} e^{2-ab}=ae^{y_{min}} e^{2-ab}
<a^2e^{-ab+3}$$
and
$$
 g'_{a,b}(y_2^-)>1-a^2 e^{-ab+3}
\leqno (A.8)$$
for big values of $a$.

Secondly, we have $g_{a,b}(y_1^+)=y_{max}$ that is
$$y_1^++\dfrac{a}{e^{y_1^+}+1}-ab=y_{max}$$
which means that
$$\dfrac{a}{e^{y_1^+}+1}=y_{max}-y_1^++ab$$
and, since $y_1^+\in (y_{max},y_{min})$, we have
$$\dfrac{a}{e^{y_1^+}+1}=ab+o(a)$$
and, consequently
$$\dfrac{1}{e^{y_1^+}+1}=b+o(1), \qquad e^{y_1^+}=\dfrac1{b}-1+o(1).$$

Therefore,
$$
 g'_{a,b}(y_1^+)=1-ab(1-b)+o(a)
\leqno (A.9)$$
which tends to $-\infty$ as $a\to \infty$.

Finally, we have $g_{a,b}(y_1^-)=y_2^+$ and $g_{a,b}(y_2^+)=y_{min}$. The last inequality together with the definition of $g_{a,b}$ and the fact that $y_2^+>y_{min}$ imply that $y_2^+=ab+o(a)$. This means that
$$y_1^-+\dfrac{a}{e^{y_1^-}+1}-ab =ab+o(a).$$
Since $y_1^-\in (y_{max},y_{min})$, we have
$$\dfrac{a}{e^{y_1^-}+1}=2ab+o(a),$$
$$\dfrac{1}{e^{y_1^-}+1}=2b+o(1), \qquad e^{y_1^-}=\dfrac1{2b}-1+o(1).$$

This means that
$$
 g'_{a,b}(y_1^{-})=1-2ab(1-2b)+o(a)
\leqno (A.10)$$
which also tends to $-\infty$ for big values of $a$.

Therefore, (A.6) directly follows from inequalities (A.8), (A.9), and (A.10). Thus, we complete the proof of Lemma A.4.
\end{proof}

Lemma A.4, together with the properties of the second derivative of $g_{a,b}$ (see (A.1)) and the fact that for any $x\in K$
$$ (x\in J_2)\implies (g_{a,b}(x) \in J_1)$$
implies that
$$
(g^2_{a,b})'(x)>1 \qquad \forall x\in K.
\eqno (A.11)$$
Thus the conjugacy between $g_{a,b}|_K$ and the shift of finite type on $\Sigma_0$ follows from (A.11) and the disjointedness of $J_1$ and $J_{2}$. Thus, Key Lemma A.1 is proven.
\end{proof}

\section*{Appendix B. Some estimates of the number of periodic solutions}
\setcounter{section}{8}

Finally, we estimate the number of periodic sequences in $\Sigma_0$.
Before formulating the result, we briefly refer to the history of the problem. Sharkovskii \cite{shark64} and, later on, Li and Yorke
(for a particular case) \cite{liyorke}, established the famous order on the set of natural numbers, so that a point of a 'higher' period implies the existence of a point of any 'lower' period. A bit later, Elaydi \cite{elaydi96} proved the converse result, establishing that for any $m\in {\mathbb N}$ there is a mapping with $m$ being the 'biggest' existing period according to the Sharkovskii's order.  What is especially important for us, is the result by Bau-Sen Du \cite{bausendu} (see also, \cite{ivanov} for a shorter proof). The general lower estimate on the value of periodic solution was given: in particular, the existence of a period-3 orbit for a 1D mapping $f$ implies that for any $n\in {\mathbb N}$ there exist at least $L_n$ solutions of the equation $f^n(x)=x$. Here $L_n$ is the Lucas number, related to Fibonacci's numbers as follows: $L_n=F_{n+1}+F_{n-1}$. 

\textbf{Proposition B.1.} \emph{Let $S$ be the natural shift mapping of the above set. Then for every $n\in {\mathbb N}$ the equation 
$$
S^n(x)=x
\eqno (B.1)$$
has at least $L_n$ distinct solutions.}

\begin{proof}
Let $B_n$ be the desired number. First of all, calculate the number $A_n$ of $n$-tuples of zeros and ones without repeating number 1 (we call them admissible). For $n=1$, we have $A_1=2$, for $n=2$, we have $A_3=3$. In addition, the number 0 can be concatenated to the right of any $n-1$ - tuple, while it is only sequences ending with $0$ that can be extended by $1$. So $A_{n+2}=A_{n}+A_{n+1}$ for any $n\in {\mathbb N}$ and, hence, $A_n=F_{n+2}$. To calculate $B_n$, we observe that any solution of (B.1) belongs to exactly one of two types:
\begin{enumerate}
\item it is a concatenation of an admissible $n-1$ - tuple and $0$;
\item it is a concatenation of an admissible $n-1$ - tuple, starting and ending with zeros and the number $1$. 
\end{enumerate}
Therefore, $B_{n+1}=A_{n}+A_{n-2}$ for any $n\ge 2$. Respecting the fact that $B_1=1$, $B_2=3$, we get the statement of the proposition.
\end{proof}

\begin{remark} This statement gives the estimate of the number of periodic solutions that exist due to Theorem 6.1. Although this estimate is similar to that given by Bau-Sen Du in the quoted paper, the latter proposition estimates the number of periodic points of a hyperbolic set $K$ from Lemma A.1. Other solutions, for example, stable ones, are not counted here. 
\end{remark}

\section*{Acknowledgments}
S. K. would like to thank Peking University for their support and hospitality. Y. Z. would like to thank the Mathematical Institute of the Polish Academy of Sciences for their support and hospitality, and is partially supported by NSFC Nos. 12161141002, 12271432 and USTC-AUST Math Basic Discipline Research Center. The authors would also like to thank Dr.\, Magdalena Chmara from Gda\'nsk University of Technology for her kind help. They are grateful to the organizers of the VIII Symposium on Nonlinear Analysis in Toru\'{n} 2024, where some parts of this work were done.

\end{document}